\newcommand{\N}{\mathbb{N}}
\newcommand{\Aut}{\mathop{\mathrm{Aut}}}
\newcommand{\Deck}{\mathop{\mathrm{Deck}}}
\newcommand{\SHomeo}{\mathop{\mathrm{SHomeo}}}
\newcommand{\LHomeo}{\mathop{\mathrm{LHomeo}}}
\newcommand{\SMap}{\mathop{\mathrm{SMap}}}
\newcommand{\LMap}{\mathop{\mathrm{LMap}}}
\newcommand{\Map}{\mathop{\mathrm{Map}}}
\newcommand{\w}[1]{\widetilde{#1}}
\DeclareMathOperator{\Homeo}{Homeo}
\newtheorem{thm}{Theorem}[section]
\newtheorem{lemma}[thm]{Lemma}
\newtheorem{proposition}[thm]{Proposition}
\newtheorem{corollary}[thm]{Corollary}
\newtheorem{mainthm}{Theorem}
\newtheorem{maincor}[mainthm]{Corollary}
\theoremstyle{definition}
\newtheorem{definition}[thm]{Definition}
\theoremstyle{remark}
\newtheorem{remark}[thm]{Remark}
\numberwithin{equation}{section}
\begin{document}

\title[Birman--Hilden theory for big mapping class groups]{Birman--Hilden theory for big mapping class groups}

\author{Nestor Colin}
\address{Instituto de Matem\'aticas, Universidad Nacional Aut\'onoma de M\'exico,
Oaxaca de Ju\'arez, Oaxaca 68000, M\'exico}
\email{ncolin@im.umam.mx} 

\author{Rubén A. Hidalgo}
\address{Departamento de Matemática y Estadística, Universidad de La Frontera, Temuco 4811230, Chile.}
\email{ruben.hidalgo@ufrontera.cl}

\author{Rita Jiménez Rolland}
\address{Instituto de Matem\'aticas, Universidad Nacional Aut\'onoma de M\'exico,
Oaxaca de Ju\'arez, Oaxaca 68000, M\'exico}
\email{rita@im.unam.mx} 

\author{Israel Morales}
\address{Departamento de Matemática y Estadística, Universidad de La Frontera, Temuco 4811230, Chile.}
\email{israel.morales@ufrontera.cl} 

\author{Saúl Quispe}
\address{Departamento de Matemática y Estadística, Universidad de La Frontera, Temuco 4811230, Chile.}
\email{saul.quispe@ufrontera.cl}

\subjclass{Primary 57M12, 57K20; Secondary 57M60, 57M07}

% 57M12 Low-dimensional topology of special (e.g., branched) coverings
% 57M07 Topological methods in group theory
% 57M60 Group actions on manifolds and cell complexes in low dimensions
% 20F34 Fundamental groups and their automorphisms (group-theoretic aspects)
% 57K20: 2-dimensional topology (including mapping class groups of surfaces, Teichm¨uller theory, curve complexes, etc.)
% 55N35: Other homology theories in algebraic topology
% 55N20: Generalized (extraordinary) homology and cohomology theories in algebraic topology
% 20J05: Homological methods in group theory
% 57S05: Topological properties of groups of homeomorphisms or diffeomorphisms
% 20H10: Fuchsian groups and their generalizations
% 30F50: Klein Surfaces

\keywords{Mapping class groups, branched covers, surfaces of infinite type,
non-orientable surfaces}

\begin{abstract}
Let $S$ and $X$ be two connected topological surfaces without boundary, and assume that  $S$ is either of infinite type or has negative Euler characteristic. In this paper, we prove that if  $p:S\rightarrow X$ is  a {\it fully ramified} branched covering map, then $p$ satisfies the {\it Birman--Hilden property}. This generalizes a theorem of Winarski, and the known results in the literature, to the context of surfaces of infinite type and branched covering maps of infinite degree. As an application, we show that the mapping class group (respectively, the braid group on $k$-strands) of a non-orientable surface of infinite type can be realized as a subgroup of the mapping class group (respectively, the braid group on $2k$-strands) of its orientable double cover.
\end{abstract}

\maketitle

% \tableofcontents

%%%%%%%%%%%%%%%% INTRODUCTION %%%%%%%%%%%%%%%%%%%%%%%%%%
%%%%%%%%%%%%%%%%%%%%%%%%%%%%%%%%%%%%%%%%%%%%%%%%%%%%%%%
\section{Introduction}

The study of the interaction between mapping class groups through branched covers has its foundation in the work of Birman and Hilden in the 1970s \cite{BH71}, \cite{BH72}, \cite{BH73}. They established a criterion, now known as the \emph{Birman-Hilden property}, which provides a connection between certain subgroups of the mapping class groups of the total and base spaces of a branched cover. Specifically, a branched covering map $p: S \to X$  satisfies the \emph{Birman-Hilden property} if every fiber-preserving homeomorphism of $S$ that is isotopic to the identity admits an isotopy through fiber-preserving homeomorphisms.  In Section \ref{Sec:BH}, we review some equivalent formulations of this property for branched covers of topological surfaces, and its interpretation in terms of mapping class groups.

The purpose of this article is to extend the known results in the literature to the context of surfaces of infinite type and certain branched covering maps of infinite degree.  Recall that a topological surface is of \emph{finite type} if its fundamental group is finitely generated, and of \emph{infinite type} otherwise.

Let us consider a branched covering map $p:S\rightarrow X$ of surfaces. We say that $p$ is \emph{fully ramified} if for every branch point  $b \in X$ of $p$, every point $\tilde{b}$ in the fiber $p^{-1}(b)$ is a critical point, i.e., the ramification number of $p$ at $\tilde{b}$ is at least two. Our main result applies to fully ramified branched covering maps.

\begin{mainthm}[Birman--Hilden property for big mapping class groups]\label{BirmanHildenProperty-Generalization}
   Let $S$ and $X$ be two connected topological surfaces without boundary, where $S$ is either of infinite type or of finite type with negative Euler characteristic. If $p:S\to X$ is a fully ramified branched covering map, then $p$ has the Birman-Hilden property.
\end{mainthm}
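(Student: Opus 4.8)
The plan is to reduce, via the reformulations recorded in Section~\ref{Sec:BH}, to an unbranched statement and then to run a universal-cover argument that never uses cocompactness. Write $B\subset X$ for the discrete set of branch points, $\widetilde B:=p^{-1}(B)\subset S$, and $p^{\circ}\colon S^{\circ}:=S\setminus\widetilde B\to X^{\circ}:=X\setminus B$ for the induced genuine covering map. Because $p$ is fully ramified, the set of ramification points of $p$ is exactly $\widetilde B$; this is the point where the hypothesis enters, since it gives that every fiber-preserving isotopy of $S$ preserves $\widetilde B$, and hence the equivalence between the branched and unbranched versions of the Birman--Hilden property (Section~\ref{Sec:BH}). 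So it suffices to show: every fiber-preserving homeomorphism $f$ of $S^{\circ}$ with $f\simeq\mathrm{id}_{S^{\circ}}$ is isotopic to $\mathrm{id}_{S^{\circ}}$ through fiber-preserving homeomorphisms. The hypotheses force $S^{\circ}$ to be of infinite type or to have negative Euler characteristic, and force $X^{\circ}$ to admit a complete hyperbolic metric (orientable or not); pull such a metric back along $p^{\circ}$ and fix compatible universal covers $\pi_S\colon\mathbb D\to S^{\circ}$ and $\pi_X:=p^{\circ}\circ\pi_S\colon\mathbb D\to X^{\circ}$, so that $\pi_1(S^{\circ})\le\pi_1(X^{\circ})$ are discrete, non-elementary subgroups of $\mathrm{Isom}(\mathbb H^2)$.

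Let $\bar f$ be the homeomorphism of $X^{\circ}$ with $p^{\circ}\circ f=\bar f\circ p^{\circ}$; the argument splits into two steps. \emph{Step 1: $\bar f\simeq\mathrm{id}_{X^{\circ}}$.} Fix an Alexander system $\{c_i\}_{i\in I}$ of essential simple closed curves and arcs on $X^{\circ}$ in the sense of the Alexander method for infinite-type surfaces; then each $(p^{\circ})^{-1}(c_i)$ is a locally finite family of essential simple closed curves and arcs on $S^{\circ}$. Since $f\simeq\mathrm{id}_{S^{\circ}}$, it fixes the isotopy class of every component of every $(p^{\circ})^{-1}(c_i)$; pushing these isotopies down by $p^{\circ}$, and using uniqueness of roots in surface groups to handle components covering their image with multiplicity, gives $\bar f(c_i)\simeq c_i$ for all $i$. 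Moreover $f$ fixes every end of $S^{\circ}$, and since a covering map induces a surjection on spaces of ends, $\bar f$ fixes every end of $X^{\circ}$. The Alexander method then yields $\bar f\simeq\mathrm{id}_{X^{\circ}}$. Lifting an isotopy $(\bar f_t)$ from $\bar f$ to $\mathrm{id}_{X^{\circ}}$ through the covering $p^{\circ}$, starting at $f$, produces a fiber-preserving isotopy $(f_t)$ (each $f_t$ descends to $\bar f_t$) from $f$ to some deck transformation $d\in\Deck(p^{\circ})$.

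\emph{Step 2: a deck transformation of $p^{\circ}$ that is isotopic to $\mathrm{id}_{S^{\circ}}$ is the identity.} Granting this, since $f\simeq\mathrm{id}_{S^{\circ}}$ and $f$ is fiber-preservingly isotopic to $d$ we get $d\simeq\mathrm{id}_{S^{\circ}}$, hence $d=\mathrm{id}_{S^{\circ}}$, so $f$ is isotopic to $\mathrm{id}_{S^{\circ}}$ through fiber-preserving homeomorphisms, which by the reduction proves the theorem. To prove the claim, suppose $d\ne\mathrm{id}$; then $d$ acts freely on $S^{\circ}$, and any lift $\sigma$ of $d$ to $\mathbb D$ lies in $\pi_1(X^{\circ})\setminus\pi_1(S^{\circ})$. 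Lift an isotopy $(d_t)$ from $d$ to $\mathrm{id}_{S^{\circ}}$ through $\pi_S$ to a path $(\widetilde d_t)$ with $\widetilde d_0=\sigma$; then $\widetilde d_1=:\gamma_0\in\pi_1(S^{\circ})$, and for each $\delta\in\pi_1(S^{\circ})$ the conjugate $\widetilde d_t\,\delta\,\widetilde d_t^{-1}$ is a deck transformation of $\pi_S$ depending continuously on $t$ in the compact-open topology, hence is constant because $\pi_1(S^{\circ})$ is discrete in $\Homeo(\mathbb D)$; thus $\sigma\delta\sigma^{-1}=\gamma_0\delta\gamma_0^{-1}$ for all $\delta\in\pi_1(S^{\circ})$. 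Then $\gamma_0^{-1}\sigma\in\mathrm{Isom}(\mathbb H^2)$ centralizes the non-elementary group $\pi_1(S^{\circ})$, and so $\gamma_0^{-1}\sigma=\mathrm{id}$, i.e.\ $\sigma=\gamma_0\in\pi_1(S^{\circ})$, a contradiction.

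The main obstacle is Step 1. In the finite-type setting one typically shows $\bar f\simeq\mathrm{id}$ by lifting $f$ to $\mathbb D$ and comparing boundary values on the circle at infinity, but for an infinite-type surface a lift of a homeomorphism need not be a quasi-isometry of $\mathbb H^2$ and need not extend continuously to $\partial\mathbb D$, so this route is closed. Replacing it with the Alexander method for infinite-type surfaces is the delicate part: one must choose an Alexander system on $X^{\circ}$ whose preimage still verifies the hypotheses of the method on $S^{\circ}$, and must keep track of the ends and of curves covered with multiplicity so that the isotopies genuinely descend through $p^{\circ}$. Step 2, by contrast, is essentially a statement about discrete subgroups of $\mathrm{Isom}(\mathbb H^2)$ (triviality of the centralizer of a non-elementary subgroup) together with elementary covering-space theory, and it carries over to the infinite-type case without change.
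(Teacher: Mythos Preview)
Your reduction to the unbranched cover $p^\circ$ is where the argument breaks. You claim that because $p$ is fully ramified, ``every fiber-preserving isotopy of $S$ preserves $\widetilde B$, and hence the equivalence between the branched and unbranched versions of the Birman--Hilden property.'' But the hypothesis in the Birman--Hilden property is only that $f$ is isotopic to $\mathrm{id}_S$ through \emph{arbitrary} homeomorphisms of $S$, not through fiber-preserving ones; that isotopy has no reason to preserve $\widetilde B$. So from $f\simeq\mathrm{id}_S$ you cannot conclude $f|_{S^\circ}\simeq\mathrm{id}_{S^\circ}$, and your Step~1 starts from an assumption you have not established. Concretely, when you write ``$f$ fixes the isotopy class of every component of every $(p^{\circ})^{-1}(c_i)$,'' you only know this in $S$, not in $S^\circ$; the isotopies you want to ``push down by $p^\circ$'' may cross points of $\widetilde B$ and do not live in $S^\circ$ at all. (Note also that fiber-preserving homeomorphisms of $S$ preserve $\widetilde B$ for \emph{any} branched cover, fully ramified or not, so this cannot be where the hypothesis enters.)

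This is not a technicality: passing from an isotopy in $S$ to one in $S\setminus\widetilde B$---equivalently, from $\bar f(c_i)\simeq c_i$ in $X$ to $\bar f(c_i)\simeq c_i$ in $X\setminus B$---is precisely the content of the theorem, and it is where the fully-ramified hypothesis does the real work. The paper's proof handles this directly: it takes the Alexander system on $X\setminus B$, and for each curve $\gamma$ it shows that the annulus or infinite strip $W\subset S$ bounded by isotopic components of $p^{-1}(\gamma)$ and $p^{-1}(\bar f(\gamma))$ projects to a compact surface with two boundary components, which by Lemma~\ref{Lemma:BranchedCoverOfTheAnnulusStrip} must be a closed annulus with $p|_W$ unbranched; full ramification then forces $p(W)\cap B=\emptyset$, so $\gamma\simeq\bar f(\gamma)$ \emph{relative to} $B$. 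Your Step~2 is essentially Proposition~\ref{Prop:DectTransfIsotopicId} and is fine, but it cannot be reached without closing the gap in Step~1. (Two smaller issues: covering maps do not in general induce surjections on end spaces, and in the finite-type case the Alexander system is only $2$-stable, so one still needs the torsion-free-kernel argument of Corollary~\ref{Corollary:ThetaFreeTorsionFiniteTypeSurfaces}.)
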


As regular branched covering and unbranched covering maps are fully ramified, the above provides the following.

\begin{maincor}\label{coro1}
Let $S$ and $X$ be two connected topological surfaces without boundary, where $S$ is either of infinite type or of finite type with negative Euler characteristic. If $p:S\to X$ is either a regular branched covering or an unbranched covering map, then $p$ has the Birman-Hilden property.   
\end{maincor}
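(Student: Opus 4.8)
The plan is to derive this corollary immediately from Theorem~\ref{BirmanHildenProperty-Generalization}. The hypotheses on $S$ and $X$ in the two statements coincide, so all that is needed is to verify that an unbranched covering map and a regular branched covering map are each \emph{fully ramified}; then one simply invokes the main theorem.

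First I would dispose of the unbranched case: an unbranched covering map $p:S\to X$ has no branch points at all, so the condition defining \emph{fully ramified} --- for every branch point $b\in X$, every point of $p^{-1}(b)$ is a critical point --- holds vacuously. For a regular branched covering map $p:S\to X$, the point to exploit is that the deck group $\Deck(p)$ acts transitively on each fiber $p^{-1}(x)$. Given $\tilde b_1,\tilde b_2\in p^{-1}(b)$, one picks $g\in\Deck(p)$ with $g(\tilde b_1)=\tilde b_2$; since $p\circ g=p$ and $p$ is locally modelled on $z\mapsto z^{k_i}$ near $\tilde b_i$, transporting the local degrees through $g$ forces $k_1=k_2$. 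Hence the ramification number of $p$ is constant along each fiber, and therefore as soon as one point of a fiber over $b$ is critical --- which is precisely what it means for $b$ to be a branch point --- every point of that fiber is critical. So $p$ is fully ramified, and Theorem~\ref{BirmanHildenProperty-Generalization} yields the Birman--Hilden property in both cases.

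I do not expect a genuine obstacle here: the corollary is essentially a translation of the hypotheses of the main theorem into two familiar special cases. The only thing that requires attention is foundational rather than substantive --- namely, making sure that ``regular branched covering'' is taken in the sense (as set up in Section~\ref{Sec:BH}) for which $\Deck(p)$ acts transitively on fibers and the local model $z\mapsto z^{k}$ is available at every point, even when $\deg p$ is infinite and $S$ is of infinite type. Granting that, the argument above is complete, and all of the real work sits in the proof of the main theorem.
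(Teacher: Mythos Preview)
Your proposal is correct and matches the paper's approach exactly: the paper simply notes in one line that regular branched covering maps and unbranched covering maps are fully ramified, and then states the corollary as an immediate consequence of Theorem~\ref{BirmanHildenProperty-Generalization}. Your argument supplies the (easy) details behind that one-line observation, but the route is the same.
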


We prove Theorem \ref{BirmanHildenProperty-Generalization} by generalizing the strategy of Winarski's proof of \cite[Theorem 1]{Winarski2015}; see also \cite[Section 9.4.3]{FaMa12}. Our approach uses combinatorial topology of curves on surfaces and relies on the Alexander method for finite and infinite type surfaces. A key step that allows us  to include infinite-sheeted branch covers in our proof is Lemma \ref{Lemma:BranchedCoverOfTheAnnulusStrip}; it gives a description of the structure of branched covers  from the closed annulus and the closed infinite strip to a compact surface with two  boundary components. Furthermore, we work on the category of Klein surfaces to obtain some auxiliary lemmas. This allows us to consider a unified argument which includes infinite-sheeted branched covering maps, and surfaces of finite and infinite type, orientable and non-orientable.

%%%%%%%%%%%%%%%
\subsection{Relation with known results in the literature}  Under the assumption that the surfaces are closed, orientable and hyperbolic,  Winarski proved in \cite[Theorem 1]{Winarski2015} that {fully ramified} branched coverings maps (so necessarily finite-sheeted) satisfy the Birman-Hilden property.  
Her result recovers the case of regular branched covers originally due to Birman and Hilden \cite{BH71,BH72, BH73}, to Zieschang \cite{Zie73}, and to Maclachlan and Harvey  \cite{MacHarvey},  and the case of unbranched covers proved by Aramayona, Leininger and Souto \cite[Proposition 3]{ALS09}. The work of Birman and Hilden also includes regular unbranched covers between closed surfaces that may be non-orientable, including the orientable double cover of a closed non-orientable surface; see \cite[Section 4]{BH72}.  Furthermore, Atalan and Medetogullari \cite[Theorem 1.1]{AM20} extended Winarski's result for  fully ramified branched covers of non-orientable closed hyperbolic surfaces using the orientable double cover.  A survey of the work of Birman and Hilden, and a brief history of the evolution of this theory for finite-sheeted branched covers can be found in \cite{MW21BH}; see also references therein. 

For the case of infinite-sheeted branched covers, a result where the Birman--Hilden property has been explicitly proved is the following. Let $L$ be the \emph{Jacob ladder} and $h:L \rightarrow L$ be the \emph{handle shift} homeomorphism. In  \cite[Lemma 4.1]{SounyaEtAl2021}, Dey, Dhanwani, Patil, and Rajeevsarathy showed that the induced regular unbranched covering map $p_g\colon L\rightarrow S_g$ induced by the action of $\langle h^{g-1}\rangle$ on $L$  satisfies the Birman-Hilden property for every $g\geqslant 2$.

On the other hand, as we explain in  Section \ref{Rmk:original}, the original argument of Birman and Hilden \cite[Theorem 3]{BH72}, \cite[Theorems 1 and 2]{BH73} can be combined with the Alexander method to give a proof of the Birman--Hilden property for unbranched regular covers  $p:S\to X$ over a non-compact hyperbolic base surface $X$. 
This includes the possibility for the surface $X$ to be non-orientable or of infinite type, and for the cover $p$ to have infinitely many sheets. 

All of the above mentioned results are particular cases of Theorem \ref{BirmanHildenProperty-Generalization}. 

\begin{remark}
In \cite[Proposition 1]{Winarski2015}, Winarski gives a necessary condition, the {\it weak curve lifting property}, for when a finite-sheeted branched covering map $p:S\to X$ satisfies the Birman--Hilden property. She applies this condition to obtain examples of non-fully ramified covering maps with finitely many sheets that do not satisfy the Birman--Hilden property. To stablish the weak lifting property, Winarsky uses that the \emph{liftable mapping class group} (see Definition \ref{def:symlift}) is a finite index subgroup of the mapping class group,   which is  true for finite-sheeted branched covers over a surface $X$ of finite type. When the branched cover has infinitely many sheets or the surface $X$ is of infinite type her argument cannot be replicated.   This motivates the following question: \emph{is there a necessary condition, like the curve lifting property,  for when an infinite-sheeted branched cover satisfies the Birman--Hilden property?} 
\end{remark}

%%%%%%%%%%%%%%%%
\subsection{Some applications to geometrically characteristic covers} 

Our Corollary  \ref{coro1} is satisfied, in particular, by unbranched {\it geometrically characteristic covers} (see Definition \ref{def:geomchar}). We use this in Section \ref{Sec:Applications} to obtain injective homomorphisms between certain mapping class groups and surface braid groups, extending known results to the context of non-orientable surfaces and infinite type surfaces. 

Let $B\subset \Sigma$ be closed and discrete. Let us denote by $\Map(\Sigma; B)$ the \emph{mapping class group} of a surface $\Sigma$ \emph{relative to} $B$, which is the group of all isotopy classes (relative to $B$) of homeomorphisms of $\Sigma$ that preserve the set $B$. We require these homeomorphisms to be orientation-preserving if $\Sigma$ is orientable. In the literature, $\Map(\Sigma; B)$ is often called ``big mapping class group'' when the surface $\Sigma$ is of infinite type or $B$ is an infinite set.

Our Proposition  \ref{Res:Injective:Homomorphism}, recovers and extends \cite[Proposition 5]{ALS09}, and it is used to give an alternative proof of \cite[Proposition 3.2]{ALMc2024}, which includes the case when the base surface is non-orientable.

\begin{maincor}[Injections between mapping class groups]\label{Cor:Injection:Characteristic}
Let $S$ and $X$ be connected topological surfaces without boundary, where $S$ is of infinite type or of finite type with negative Euler characteristic. Let $p \colon S \to X$ be an unbranched geometrically characteristic covering map. Fix some $x\in X$  and  some $\widetilde{x} \in p^{-1}(x)$. Then the homomorphism
    \[
        \Map(X;\{x\}) \to \Map(S; p^{-1}(x)), \quad [f] \mapsto [\widetilde{f}]
    \]
    is injective, where $\widetilde{f}$ is the unique lift of $f$ satisfying $\widetilde{f}(\widetilde{x}) = \widetilde{x}$.
\end{maincor}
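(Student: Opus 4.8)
The plan is to deduce Corollary~\ref{Cor:Injection:Characteristic} from Corollary~\ref{coro1} together with the translation of the Birman--Hilden property into the language of mapping class groups reviewed in Section~\ref{Sec:BH}. Since $p$ is an unbranched geometrically characteristic cover, every homeomorphism of $X$ lifts, and in particular for each $f$ fixing $x$ there is a unique lift $\widetilde f$ with $\widetilde f(\widetilde x)=\widetilde x$; this gives a well-defined set map on homeomorphisms, and one first checks it descends to a homomorphism $\Phi\colon\Map(X;\{x\})\to\Map(S;p^{-1}(x))$. Well-definedness on isotopy classes uses the homotopy lifting property: an isotopy from $f$ to the identity through homeomorphisms fixing $x$ lifts to an isotopy of $S$ starting at $\widetilde f$, and because the cover is characteristic every stage of the lifted isotopy is again a homeomorphism of $S$ preserving $p^{-1}(x)$; its terminal map is a lift of $\mathrm{id}_X$ fixing $\widetilde x$, hence is $\mathrm{id}_S$. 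Multiplicativity is the uniqueness of basepoint-preserving lifts: $\widetilde{f\circ g}$ and $\widetilde f\circ\widetilde g$ are both lifts of $f\circ g$ sending $\widetilde x$ to $\widetilde x$, so they coincide.

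The core of the argument is injectivity. Suppose $[\widetilde f]=1$ in $\Map(S;p^{-1}(x))$, i.e.\ $\widetilde f$ is isotopic to the identity through homeomorphisms of $S$ preserving $p^{-1}(x)$. By construction $\widetilde f$ is fiber-preserving: it descends to $f$ along $p$. Now I invoke the Birman--Hilden property for $p$ from Corollary~\ref{coro1}: a fiber-preserving homeomorphism of $S$ that is isotopic to the identity is isotopic to the identity \emph{through fiber-preserving homeomorphisms}. Pushing such a fiber-preserving isotopy $(\widetilde F_t)$ down through $p$ yields an isotopy $(F_t)$ of $X$ from $f$ to $\mathrm{id}_X$; moreover $F_t$ preserves $\{x\}$ because $\widetilde F_t$ preserves $p^{-1}(x)$ and $p(p^{-1}(x))=\{x\}$. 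Hence $[f]=1$ in $\Map(X;\{x\})$. The only subtlety to address here is the relationship between the plain mapping class group and the group relative to a marked point/fiber, and between ``isotopic to the identity'' and ``fiber-preserving isotopic to the identity'' when basepoints are tracked; this should follow from the Alexander-method/basepoint-fibration machinery already set up in Section~\ref{Sec:BH} for exactly this purpose, possibly citing the precise reformulation of Birman--Hilden stated there.

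I expect the main obstacle to be bookkeeping the marked points carefully: one must be sure that the fiber-preserving isotopy provided by the Birman--Hilden property can be taken to respect $p^{-1}(x)$ setwise (equivalently, that the descended isotopy respects $x$), and that orientation conventions are consistent so that the lift of an orientation-preserving homeomorphism of $X$ is an orientation-preserving homeomorphism of $S$ belonging to $\Map(S;p^{-1}(x))$ as we have defined it. Both points are routine given that $p$ is a covering map and the cover is geometrically characteristic, but they are where the proof must be written with care. Everything else is formal once Corollary~\ref{coro1} is in hand.
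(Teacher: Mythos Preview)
Your overall strategy is sound and close in spirit to the paper's, but there is a genuine gap at the step where you claim ``$F_t$ preserves $\{x\}$ because $\widetilde F_t$ preserves $p^{-1}(x)$''. The Birman--Hilden property for $p\colon S\to X$ gives only a \emph{fiber-preserving} isotopy $(\widetilde F_t)$; it does not guarantee that any particular fiber, in this case $p^{-1}(x)$, is preserved setwise along the isotopy. A fiber-preserving homeomorphism sends fibers to fibers, so the descended isotopy $(F_t)$ is an isotopy of $X$, but there is no reason it should fix $x$. You notice the subtlety in your final paragraph, but it is not merely bookkeeping, and nothing in Section~\ref{Sec:BH} furnishes a basepoint-tracking version of the Birman--Hilden property.

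The paper resolves this by applying Corollary~\ref{coro1} not to $p\colon S\to X$ but to the restricted unbranched cover $p|\colon S\setminus p^{-1}(x)\to X\setminus\{x\}$, which still satisfies the hypotheses. Then $\widetilde f$, being isotopic to the identity rel $p^{-1}(x)$, restricts to a fiber-preserving self-homeomorphism of $S\setminus p^{-1}(x)$ isotopic to the identity there; Birman--Hilden now gives a fiber-preserving isotopy on the punctured cover, which descends to an isotopy of $X\setminus\{x\}$, i.e.\ an isotopy of $X$ rel $\{x\}$. The paper packages this via the short exact sequence~\eqref{Eq:ShortExact:Injection} for the marked pair $(X,\{x\})$ and observes that the assignment $f\mapsto\widetilde f$ at the level of homeomorphisms splits that sequence, whence injectivity follows from Proposition~\ref{Res:Injective:Homomorphism}. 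Once you insert the puncturing step, your direct argument and the paper's exact-sequence argument become the same proof.
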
 

An example of a geometrically characteristic cover is given by the \emph{universal abelian cover} $\Sigma^{\text{ab}}$ of a surface $\Sigma$. It is the unbranched regular cover $p:\Sigma^{\text{ab}} \to \Sigma$ corresponding to the commutator subgroup of the fundamental group of $\Sigma$. If $\Sigma$  is a connected surface either of infinite type or of finite type with negative Euler characteristic, then the covering map $p:\Sigma^{\text{ab}} \to \Sigma$  has infinitely many sheets and satisfies the Birman--Hilden property by our Corollary \ref{coro1}. Moreover, the {\it liftable mapping class group} of a geometric characteristic cover  is precisely the whole mapping class group of the base surface of the cover; see Proposition \ref{Prop:Geometrically:LMod=Mod}. Therefore,  it follows from the short exact sequence (\ref{Eq:ExactSequence:MCG's}) that $\Map(\Sigma)$ is isomorphic to a quotient of a subgroup of $\Map(\Sigma^{\text{ab}})$: the \emph{symmetric mapping class group} $\SMap(\Sigma^{\text{ab}})$ with respect to $p$ (see Definition \ref{def:symlift}). 

\begin{maincor}[Mapping class groups as quotients]\label{Cor:MCG_as_Quotients}
    Let $\Sigma$ be a surface of infinite type or of finite type with negative Euler characteristic, and let $\Sigma^{\text{ab}}$ be its universal abelian cover. Then $$\Map(\Sigma)\cong \SMap(\Sigma^{\text{ab}})/H_1(\Sigma;\mathbb{Z}).$$
\end{maincor}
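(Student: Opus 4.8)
The plan is to derive Corollary~\ref{Cor:MCG_as_Quotients} from the general machinery already assembled in the paper, applied to the specific cover $p\colon\Sigma^{\text{ab}}\to\Sigma$. First I would invoke Corollary~\ref{coro1}: since $\Sigma$ is of infinite type or of finite type with negative Euler characteristic, the universal abelian cover $p$ is an unbranched covering map (in particular fully ramified), so $p$ has the Birman--Hilden property. This is precisely the input needed for the short exact sequence~(\ref{Eq:ExactSequence:MCG's}) relating the liftable mapping class group $\LMap(\Sigma)$, the symmetric mapping class group $\SMap(\Sigma^{\text{ab}})$, and the deck group of the cover. Thus the backbone of the argument is the exact sequence
\[
1 \longrightarrow \Deck(p) \longrightarrow \SMap(\Sigma^{\text{ab}}) \longrightarrow \LMap(\Sigma) \longrightarrow 1,
\]
valid once the Birman--Hilden property holds; here $\Deck(p)\cong H_1(\Sigma;\mathbb{Z})$ because $p$ is the regular cover associated to the commutator subgroup, so the deck group is $\pi_1(\Sigma)/[\pi_1(\Sigma),\pi_1(\Sigma)]=H_1(\Sigma;\mathbb{Z})$.

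Next I would identify $\LMap(\Sigma)$ with all of $\Map(\Sigma)$. The universal abelian cover is regular and its defining subgroup, the commutator subgroup of $\pi_1(\Sigma)$, is characteristic; in fact the cover is \emph{geometrically characteristic} in the sense of Definition~\ref{def:geomchar}. By Proposition~\ref{Prop:Geometrically:LMod=Mod}, the liftable mapping class group of a geometrically characteristic cover equals the full mapping class group of the base, so $\LMap(\Sigma)=\Map(\Sigma)$. One mild point to check is that the relevant homeomorphisms (orientation-preserving when $\Sigma$ is orientable) all lift, which is exactly what ``geometrically characteristic'' guarantees; I would simply cite that proposition rather than reprove it.

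Combining these two facts, the exact sequence becomes
\[
1 \longrightarrow H_1(\Sigma;\mathbb{Z}) \longrightarrow \SMap(\Sigma^{\text{ab}}) \longrightarrow \Map(\Sigma) \longrightarrow 1,
\]
which immediately yields $\Map(\Sigma)\cong\SMap(\Sigma^{\text{ab}})/H_1(\Sigma;\mathbb{Z})$. I expect no serious obstacle here: the corollary is essentially a bookkeeping assembly of Corollary~\ref{coro1}, Proposition~\ref{Prop:Geometrically:LMod=Mod}, and the exact sequence~(\ref{Eq:ExactSequence:MCG's}). The only genuinely substantive input is the Birman--Hilden property itself (Theorem~\ref{BirmanHildenProperty-Generalization}), whose proof is the heart of the paper; everything in this corollary is downstream of it. The one spot worth a sentence of care is confirming that the universal abelian cover really has infinitely many sheets under the stated hypotheses (so that it genuinely lies outside the classical finite-sheeted setting), which follows because $H_1(\Sigma;\mathbb{Z})$ is infinite for such surfaces — but this is not needed for the isomorphism, only for the remark that the result is new.
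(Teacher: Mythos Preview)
Your proposal is correct and follows essentially the same route as the paper: the paper derives the corollary directly from Corollary~\ref{coro1} (Birman--Hilden for the unbranched cover $\Sigma^{\text{ab}}\to\Sigma$), Proposition~\ref{Prop:Geometrically:LMod=Mod} (so $\LMap(\Sigma)=\Map(\Sigma)$ since the universal abelian cover is geometrically characteristic), and the short exact sequence~(\ref{Eq:ExactSequence:MCG's}) with $\Deck(p)\cong H_1(\Sigma;\mathbb{Z})$. One small clarification: the hypothesis in Corollary~\ref{coro1} is on the total space $S=\Sigma^{\text{ab}}$, not on $\Sigma$; but your observation that $\Sigma^{\text{ab}}$ is of infinite type under the stated assumptions on $\Sigma$ (since $H_1(\Sigma;\mathbb{Z})$ is infinite) handles this, so there is no gap.
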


When  $\Sigma$ is orientable, Basmajian and the second author \cite[Section 1.1]{BasHidalgo} obtained the homeomorphism type of its universal abelian cover $\Sigma^{\text{ab}}$; see also \cite[Theorem 1.2]{BCC24} and \cite[Proposition 6.1]{AGH22}. More precisely, the universal abelian cover of the punctured torus is the \emph{flute surface}
; if $\Sigma$ is of finite type with genus at least two and exactly one puncture, then $\Sigma^{\text{ab}}$ is the \emph{spotted Loch Ness monster surface}. The universal abelian cover of the remaining surfaces is the \emph{Loch Ness monster surface} $LNM$.   
In particular, the universal abelian cover of  $LNM$  is again $LNM$. It is interesting to see that, by Corollary \ref{Cor:MCG_as_Quotients}, the mapping class group of $LNM$ is isomorphic to an abelian quotient of one of its subgroups. 
  
On the other hand, the universal abelian cover $S_g^{\text{ab}}$ of a closed orientable surface $S_g$ of genus $g\geq 2$ is also homeomorphic to $LNM$. 
As a consequence of Corollary \ref{Cor:MCG_as_Quotients}, we observe that the corresponding symmetric mapping class group $\SMap(S^{\text{ab}})$  is a finitely generated subgroup of the mapping class group of $LNM$. Furthermore, our Corollary \ref{Cor:Injection:Characteristic} applies and gives a monomorphism from the mapping class group of $S_g^1$, the punctured closed orientable surface of genus $g\geq 2$, into the mapping class group of the {\it spotted} $LNM$.

\medskip

Another interesting geometrically characteristic cover is the \emph{orientable double cover} $p:S\to N$ of a non-orientable surface $N$. When $N$ is of infinite type, we prove in Corollary \ref{Corollary:DoubleCover} that the mapping class group of $N$ can be realized as a subgroup of the mapping class group of $S$. This was proved by Birman and Chillingworth \cite{BC72} for closed surfaces, as an application of Birman--Hilden theory \cite[Section 4]{BH72}. 
For surfaces of finite type, Gon{\c{c}}alves, Guaschi, and Maldonado \cite[Theorem 1.1 and Proposition 2.3]{GGMaldonado2018NonOrientable} give a proof that uses the Birman exact sequences associated to $N$ and $S$ and an application of \cite[Theorem 1.1]{GG12BraidsCovers} by Gon{\c{c}}alves and Guaschi, which shows that the orientable double cover $p:S\to N$ induces a monomorphism of the \emph{braid group $B_k(N)$ on $k$-strands of $N$} into the \emph{braid group $B_{2k}(S)$ on $2k$-strands of $S$}.

Using an argument in the reverse direction, we apply our  Corollary \ref{Corollary:DoubleCover} to prove in Corollary \ref{Cor:Injectivity:Braids}  the injectivity of the corresponding surface braid groups, including the infinite type case; see also Corollary \ref{Cor:Braids2}. These results are summarized in the following statement:

\begin{maincor}[Mapping class groups and braid groups of a non-orientable surface and its orientable double cover]\label{Cor:OrientableDoubleCover}
Let $N$ be a non-orientable connected surface, with empty boundary, of infinite type or of finite type with negative Euler characteristic, and $p:S\to N$ its {\it orientable double cover}. Then the natural homomorphisms induced by $p$ 
$$\Map(N)\rightarrow \Map(S)\text{\ \ \ \ \ \  and \ \ \ \ \ \  } B_k(N) \rightarrow B_{2k}(S)$$
are injective for $k\geqslant 1$. 
\end{maincor}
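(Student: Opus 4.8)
The plan is to deduce Corollary~\ref{Cor:OrientableDoubleCover} from the Birman--Hilden package already assembled: Theorem~\ref{BirmanHildenProperty-Generalization}, Corollary~\ref{coro1}, the intermediate Corollary~\ref{Cor:Injection:Characteristic}, and the double-cover statement Corollary~\ref{Corollary:DoubleCover}. First I would record that the orientable double cover $p\colon S\to N$ is an unbranched regular cover; since the deck group $\mathbb{Z}/2$ is characteristic in $\pi_1(N)$ (it is the orientation subgroup, hence preserved by every automorphism), $p$ is a geometrically characteristic cover in the sense of Definition~\ref{def:geomchar}, so Corollary~\ref{coro1} and hence Corollary~\ref{Cor:Injection:Characteristic} apply. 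This immediately handles the map $\Map(N)\to\Map(S)$: by Proposition~\ref{Prop:Geometrically:LMod=Mod} the liftable mapping class group is all of $\Map(N)$, so every $f\in\Homeo(N)$ lifts, and the Birman--Hilden property says the resulting assignment $[f]\mapsto[\widetilde f]$ on isotopy classes is well defined and injective — this is exactly the content of Corollary~\ref{Corollary:DoubleCover}, which I would simply cite. (One must be slightly careful about which lift is chosen: there are two lifts of $f$, differing by the deck transformation, but only one is orientation-preserving on $S$, so $\Map(N)\to\Map(S)$ is unambiguous.)

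For the braid-group statement, the idea is to reduce it to the mapping class group statement via the standard interpretation of surface braid groups as mapping class groups of punctured surfaces. Recall that $B_k(N)$ is the fundamental group of the unordered configuration space of $k$ points in $N$; by the Birman exact sequence / point-pushing description, there is a natural homomorphism $B_k(N)\to\Map(N\smallsetminus Q_k)$ where $Q_k$ is a set of $k$ marked points (or one can phrase this directly via $\Map(N; Q_k)$ modulo the subgroup fixing each point, i.e. point-pushing subgroup). Fix a point $x\in N$ not in the chosen configuration with $p^{-1}(x)=\{\widetilde x_1,\widetilde x_2\}$; the restriction of $p$ over $N\smallsetminus Q_k$, namely $S\smallsetminus p^{-1}(Q_k)\to N\smallsetminus Q_k$, is again the orientable double cover and is still geometrically characteristic (removing a discrete $\mathbb{Z}/2$-invariant set does not change the orientation character). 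Lifting a configuration of $k$ points upstairs gives a configuration of $2k$ points $p^{-1}(Q_k)$, so one obtains a commuting square relating $B_k(N)\to B_{2k}(S)$ with $\Map(N;Q_k\cup\{x\})\to\Map(S; p^{-1}(Q_k\cup\{x\}))$; the latter is injective by Corollary~\ref{Cor:Injection:Characteristic} applied to the punctured surfaces (which still satisfy the hypothesis: $S\smallsetminus p^{-1}(Q_k)$ is of infinite type, or of finite type with negative Euler characteristic since we removed points). A diagram chase — using that the vertical map $B_k(N)\to\Map(N;Q_k\cup\{x\})$ is injective when $N\smallsetminus Q_k$ has negative Euler characteristic or is of infinite type (point-pushing is faithful there) — then yields injectivity of $B_k(N)\to B_{2k}(S)$.

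The main obstacle I expect is bookkeeping around \emph{which} homomorphism $B_k(N)\to B_{2k}(S)$ is meant and showing the square actually commutes: one needs the braid $B_k(N)\to B_{2k}(S)$ induced by lifting loops of configurations to match the mapping-class-group map induced by lifting homeomorphisms, and this compatibility hinges on the naturality of the Birman exact sequence under covers, together with the fact that the $\mathbb{Z}/2$-action permutes the $2k$ lifted points freely so that the lifted configuration space is a genuine (unordered) configuration space of $S$. A secondary technical point is the low-complexity cases: if $N$ is the once- or twice-punctured projective plane, or the Klein bottle, then $N$ itself may have non-negative Euler characteristic, but the hypothesis $k\geqslant 1$ together with "$N$ non-orientable with negative Euler characteristic or infinite type" ensures $N\smallsetminus Q_k$ is hyperbolic, so the faithfulness of point-pushing and the applicability of Corollary~\ref{Cor:Injection:Characteristic} are never in jeopardy; I would note this explicitly rather than leave it implicit. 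Once the commuting square and the injectivity of its right vertical and top horizontal arrows are in place, injectivity of the bottom arrow $B_k(N)\to B_{2k}(S)$ is immediate.
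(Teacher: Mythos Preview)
Your overall strategy coincides with the paper's: deduce injectivity on braid groups by sandwiching $B_k(N)\to B_{2k}(S)$ inside the commuting diagram of Birman exact sequences
\[
\xymatrix{
1 \ar[r] & B_k(N) \ar[r]\ar[d]_{\rho} & \Map(N;P) \ar[r]\ar[d]^{\phi_k} & \Map(N) \ar[r]\ar[d]^{\phi} & 1\\
1 \ar[r] & B_{2k}(S) \ar[r] & \Map(S;\widetilde P) \ar[r] & \Map(S) \ar[r] & 1
}
\]
and then read off injectivity of $\rho$ from injectivity of $\phi_k$ (and $\phi$). The paper does exactly this, invoking contractibility of $\Homeo_0$ to get the exact rows.

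Where you diverge is in how you justify injectivity of the middle vertical map. You introduce an auxiliary point $x$ and try to invoke Corollary~\ref{Cor:Injection:Characteristic}, but that corollary only produces an injection $\Map(X;\{x\})\to\Map(S;p^{-1}(x))$ for a \emph{single} marked point fixed by the lift; it does not directly yield an injection $\Map(N;Q_k\cup\{x\})\to\Map(S;p^{-1}(Q_k\cup\{x\}))$, and your ``commuting square'' relating $B_k(N)$ to $\Map(N;Q_k\cup\{x\})$ is not the Birman sequence for $B_k(N)$ (adding the extra marked point changes the kernel). This detour is both unnecessary and, as written, not quite correct.

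The paper's route is simpler and is already implicit in what you wrote for the first half: the unique \emph{orientation-preserving} lift $f\mapsto\widetilde f$ gives a splitting of the short exact sequence \eqref{Eq:ShortExact:Injection} for \emph{any} finite marked set $P\subset N$, not just a single point. Hence Corollary~\ref{Corollary:DoubleCover} (or rather its proof via Proposition~\ref{Res:Injective:Homomorphism}) applies verbatim with $P$ in place of $\emptyset$, giving $\phi_k$ injective directly. Drop the extra point $x$ and cite Corollary~\ref{Corollary:DoubleCover} for both $\phi$ and $\phi_k$; the diagram chase you describe then goes through cleanly.
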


In \cite[Theorem 1.1]{KKuno}, Katamaya and Kuno proved that for closed surfaces the injective homomorphism $\Map(N)\rightarrow \Map(S)$ is a quasi-isometric embedding. It would be interesting to understand what can be said for surfaces of infinite type.

%%%%%%%%%%%%%%%
\subsection*{Outline of the paper} In Section \ref{Sec:Prel}, we introduce the necessary notation and preliminaries on mapping class groups, the Alexander method, branched covers and the equivalent statements of the Birman--Hilden property. In Section \ref{Sec:Klein}{\color{blue},} we work on the category of Klein surfaces and prove some auxiliary results that are needed to obtain our main result  Theorem \ref{BirmanHildenProperty-Generalization}, which we prove in Section \ref{Sec:MainProof}. We end the paper by discussing some applications in Section \ref{Sec:Applications}.

%%%%%%%%%%%%%%%%%%%
%%%%%%%%%%%%%%%%%%%
\section{Preliminaries}\label{Sec:Prel}

%%%%%%%%%%%%%%%%%%%
\subsection{Mapping class group of surfaces} 
Let $\Sigma$ be a connected topological surface and let $B$ be a closed and discrete subset in the interior of $\Sigma$. Here we consider the set $B$ as \emph{marked points} in the surface $\Sigma$. The boundary (possibly empty) of $\Sigma$  is denoted by $\partial \Sigma$, and we always assume that $\partial \Sigma$  is compact. 

If $\Sigma$ is non-orientable, we denote by  $\Homeo(\Sigma;B)$ the group of all homeomorphisms $f:\Sigma \to \Sigma$ that preserve $B$ as a set and restrict to the identity on $\partial \Sigma$. If $\Sigma$ is orientable, $\Homeo(\Sigma;B)$ denotes the group of all orientation-preserving homeomorphisms of $f:\Sigma \to \Sigma$ that preserve the set $B$ and restrict to the identity on $\partial \Sigma$.    

The \emph{mapping class group} of $\Sigma$ \emph{relative to} $B$, denoted by $\Map(\Sigma;B)$, is the group of all isotopy classes of elements in $\Homeo(\Sigma;B)$; we require that all homeomorphisms and
isotopies preserve $B$ and fix $\partial \Sigma$ pointwise. We have the natural projection map 
    \begin{equation}\label{Eq:NaturalProjection}
        \rho=\rho_{\Sigma; B}:\Homeo(\Sigma;B)\rightarrow \Map(\Sigma;B),  \ \ \ \ \ \ \ f\mapsto [f].    
    \end{equation} 
If the set of marked points $B$ is empty, we simply write $\Homeo(\Sigma)$ and $\Map(\Sigma)$.

%%%%%%%%%%%%%%%%%%
\subsection{The Alexander method} 
A \emph{simple closed curve} in a surface $\Sigma$ is an embedding of the unitary circle into $\Sigma$. A simple closed curve in  $\Sigma$  is \emph{essential} if it does not bound a disc, a once-punctured disc, an annulus or a Möbius strip. We say that $\Sigma$ is \emph{sporadic} if it has no essential simple closed curves. One can verify that a surface $\Sigma$ with empty boundary is sporadic if and only if $\Sigma$ is homeomorphic to either a sphere with at most three punctures or a projective plane with at most one puncture.

\begin{definition}[Alexander system]
    Let $\Gamma = \{\gamma_i \}_{i\in \mathbb{N}}$ be a collection of essential simple closed curves on a surface $\Sigma$ with empty boundary. We say that $\Gamma$ is an \emph{Alexander system} if it satisfies the following conditions:

    \begin{enumerate}
        \item \emph{(minimal position)} the elements in $\Gamma$ are in pairwise minimal position\footnote{Two simple close curves are in \emph{minimal position} it they do not form a \emph{bigon}, see \cite[Section 1.2.4]{FaMa12} for details.},
        \item \emph{(distinct isotopy classes)} for $\gamma_i,\gamma_j\in \Gamma$ with $i\neq j$, we have that $\gamma_i$ is not isotopic to $\gamma_j$,
        \item \emph{ (no triple intersections)} for all distinct $i, j, k\in \N$, at least one of the following sets is empty: $\gamma_i \cap \gamma_j$, $\gamma_j \cap \gamma_k$, $\gamma_k \cap \gamma_i$ and, 
        \item \emph{(local finiteness)} for any finite type subsurface $S$ of $\Sigma$, up to isotopy, $S$ interects a finite number of curves in $\Gamma$.
    \end{enumerate}
\end{definition}

Let $\mathcal{A}$ be a collection of curves on a surface $\Sigma$ with empty boundary. We say that $\mathcal{A}$ \emph{fills} $\Sigma$ if  $\Sigma\setminus \mathcal{A}$ is the disjoint union of open discs and once-punctured open discs. 

\begin{definition}[k-stable Alexander system]
Let $\Gamma$ be an Alexander system on a surface $\Sigma$ with empty boundary and $k\geqslant 1$. We say $\Gamma$ is a $k$-\emph{stable Alexander system} if $\Gamma$ fills $\Sigma$ and every $f\in \Homeo(\Sigma)$  that preserves the isotopy classes of elements in $\Gamma$ satisfies that $f^k$ is isotopic to the identity.
\end{definition}

If $\Sigma$ is an orientable finite type surface, in \cite[Proposition 2.8]{FaMa12} it is shown that any Alexander system that fills $\Sigma$ is $k$-stable for some $k$. The proof also works in the setting of non-orientable finite type surfaces.  Moreover, for any finite type surface it is possible to exhibit $k$-stable Alexander systems for $k$ at most two. Examples of $1$-stable Alexander systems are exhibited in \cite[Theorem 1.1]{alexander-infinite} for orientable infinite type surfaces and in \cite[Theorem A]{Hidber-Hernández2021} for non-orientable infinite type surfaces. The existence of $k$-stable Alexander systems is the so-called \emph{Alexander's method} and will be an important ingredient in the proof of our main result Theorem \ref{BirmanHildenProperty-Generalization}.

\begin{thm}[Alexander's method for surfaces]\label{thm:AlexanderMethodInfiniteSurfaces}
    Let $\Sigma$ be a non-sporadic topological surface with empty boundary. 
    \begin{enumerate}
        \item If $\Sigma$ is of finite type, then it has a $2$-stable Alexander system.
        \item If $\Sigma$ is of infinite type, then it has a 1-stable Alexander system. 
    \end{enumerate}
\end{thm}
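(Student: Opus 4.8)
The plan is to prove Theorem \ref{thm:AlexanderMethodInfiniteSurfaces} by citing and, where needed, adapting the existing constructions of $k$-stable Alexander systems, treating separately the four cases according to whether $\Sigma$ is orientable or non-orientable and of finite or infinite type. First I would dispose of the infinite type cases. If $\Sigma$ is orientable of infinite type, \cite[Theorem 1.1]{alexander-infinite} exhibits a collection $\Gamma$ of essential simple closed curves that fills $\Sigma$, is locally finite, has the no-triple-intersection and minimal-position properties, and has the property that any homeomorphism preserving the isotopy classes of its elements is isotopic to the identity; that is exactly a $1$-stable Alexander system. If $\Sigma$ is non-orientable of infinite type, the same conclusion is provided by \cite[Theorem A]{Hidber-Hern\'andez2021}. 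So in the infinite type case there is nothing to do beyond invoking these results and checking that their hypotheses (non-sporadic, empty boundary) match ours.

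For the finite type cases the claim is that $\Sigma$ admits a $2$-stable Alexander system. The reference point is \cite[Proposition 2.8]{FaMa12}, which for orientable finite type surfaces shows both that any filling Alexander system is $k$-stable for some $k$ and that one can choose a filling Alexander system with $k \le 2$; the surface being non-sporadic guarantees that essential simple closed curves, hence filling systems, exist. I would first recall the orientable construction: take a standard filling collection of curves (for $S_{g,n}$ one can use a chain of curves together with a few extra curves enclosing the punctures and handles), verify it is an Alexander system, and cite the $2$-stability. For the non-orientable finite type case, I would point out — as the paragraph before the theorem already asserts — that the proof of \cite[Proposition 2.8]{FaMa12} goes through verbatim: the argument there is a cut-along-the-curves induction on the complexity of the complementary pieces (which become disks and once-punctured disks because the system fills), and this induction is insensitive to orientability. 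One then needs a filling Alexander system on a non-orientable finite type surface; such a system can be built from the orientable double cover picture or directly by adding to a filling system a curve that detects each crosscap, and again the $2$ appears because a homeomorphism fixing all the complementary disks pointwise up to isotopy can still permute or flip a bounded amount of data, squared away by taking $f^2$.

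The main obstacle I expect is not the logical structure — which is essentially a case-by-case appeal to the literature — but the bookkeeping needed to make the statement self-contained: namely, verifying in each of the four cases that the cited construction genuinely produces a system satisfying all four Alexander-system axioms as stated here (in particular local finiteness, which is automatic in finite type but must be read off carefully from the infinite type constructions), and that the stability constant is no larger than claimed ($1$ in the infinite type case, $2$ in the finite type case). A secondary subtlety is the non-orientable finite type case, where one must be slightly careful that the filling system one writes down has complementary regions that are disks and once-punctured disks and not M\"obius bands; this is arranged by including enough curves, and it is the only place where the construction differs in spirit from the orientable one. Once these verifications are in place, the theorem follows immediately by assembling the four cases.
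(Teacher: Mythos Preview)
Your proposal is correct and matches the paper's approach: the theorem is not proved in the paper but is stated with the preceding paragraph pointing to the same references you cite (\cite[Proposition 2.8]{FaMa12} for finite type, with the remark that the argument carries over to the non-orientable case, and \cite[Theorem 1.1]{alexander-infinite} and \cite[Theorem A]{Hidber-Hernández2021} for the orientable and non-orientable infinite type cases respectively). Your additional bookkeeping remarks are reasonable but go beyond what the paper itself records.
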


See also \cite{Shapiro2021} for a detailed discussion about the Alexander method.

%%%%%%%%%%%%%%%%%%%%
\subsection{Birman-Hilden theory for branched covers}\label{Sec:BH}
Let us recall the terminology needed to define the Birman--Hilden property and some equivalent statements.

\begin{definition}[Branched covering map]
Let $S$ and $X$ be topological surfaces and let 
$p:S \rightarrow X$ be a surjective continuous map.

\begin{enumerate}
\item We say that the map $p$ is a \emph{branched covering map} or \emph{branched cover} if there exists a closed and discrete subset $B \subset \text{int } X$, which might be empty, called the \emph{branch locus} of $p$, with the following properties: 

\begin{enumerate}
    \item[a)] $p\vert_{S^\ast}: S^\ast \to X^\ast$ is a covering map where $S^\ast:=S\setminus p^{-1}(B)$ and $X^\ast:=X\setminus B$, and 
    \item[b)] each point $b\in B$ has a neighborhood $U_b$, homeomorphic to an open disk, such that $p^{-1}(U_b)$ is a disjoint union of a collection $\{V_j\}_{j\in J}$ of open sets, each one homeomorphic to an open disk cointaining exactly one point $\tilde{b}_j\in p^{-1}(b)$, and $p\vert_{V_j}:V_j\to U_b$ is a finite branched cover for each $j\in J$, that is, it is topologically equivalent to a branched cover $z\in \mathbb{D}^2\mapsto z^{m_j}\in \mathbb{D}^2$, where $m_j \geq 1$ (with at least one of them bigger or equal to two) is an integer called the \emph{ramification number of $p$ at $\tilde{b}_j$.}
\end{enumerate}

\item If the set $B$ is empty, we say that $p$ is  \emph{unbranched}.

\item If $p$ is a branched covering map whose restriction  $p\vert_{S^\ast}: S^\ast \to X^\ast$ is a regular covering map, then we say that $p$ is 
a \emph{regular branched covering map}. 

\item The \emph{Deck group} of $p$, denoted by $\mathrm{Deck}(p)$, is the group of deck transformations of the covering map $p\vert_{S^\ast}$.
\end{enumerate}
\end{definition}

\begin{remark}
    The branch locus $B$ of a branched covering map $p:S\rightarrow X$ is closed and discrete in $X$. Therefore, $p^{-1}(B)$ is also discrete and closed in $S$. Each point of $B$ is called a {\it branch point} of $p$.
\end{remark}

\begin{definition}[Fully ramified cover]\label{Def:fully}
Let $p:S\rightarrow X$ be a branched covering map. An element in $p^{-1}(B)$ is a \emph{critical point} if its ramification number is at least two; otherwise, it is called an \emph{unramified point}. We say that $p$ is \emph{fully ramified} if each element in the preimage of the branch locus of $p$ is a critical point. 
\end{definition}

Let $p: S \to X$ be a branched covering map of topological surfaces.  We say that a function $f : S \to S$ is \emph{fiber-preserving} if for each $x \in X$ there is $y \in X$ so that $f(p^{-1}(x)) = p^{-1}(y)$. In this situation, there is a function $\bar{f}:X \to X$ such that $\bar{f} \circ p= p \circ f$. This means also that $\bar{f}(B) \subset B$.
If $f$ is a homeomorphism, then $\bar{f}$ is also a homeomorphism preserving $B$.

Let $\mathrm{SHomeo}(S)$ denote the group of all fiber-preserving homeomorphisms of $S$. We call this group the \emph{symmetric homeomorphism group} of $S$ with respect to $p:S \to X$ (in what
follows, whenever $p$ is clear, we will just say {\it symmetric homeomorphism group}). On the other hand, denote by $\mathrm{LHomeo}(X;B)$ the group of all \emph{liftable} homeomorphisms of $X$ that preserve the branch locus $B$. As any fiber-preserving homeomorphism of $S$ projects under $p$ to a homeomorphism of $X$ that preserves the branch locus $B$, and any liftable homeomorphism of $X$ (so preserving $B$) defines a fiber-preserving homeomorphism of $S$, there is a well-defined surjective group homomorphism 
    \begin{equation}\label{eqn:Phi:Homeo's}
        \widetilde{\Phi}: \SHomeo(S)\longrightarrow \LHomeo(X;B)
    \end{equation}

and a short exact sequence of groups 
    \begin{equation} \label{Eq:ExactSequence:Homeo's}
        1\rightarrow \Deck(p) \rightarrow \SHomeo(S) \xrightarrow{\widetilde{\Phi}} \LHomeo(X;B) \rightarrow 1.
    \end{equation}

\begin{definition}[Symmetric and liftable mapping class group]\label{def:symlift} Let $p: S \to X$ be a branched covering map of topological surfaces.  We define the \emph{symmetric mapping class group} of $S$ by
\[ 
    \SMap(S):=\rho_S(\SHomeo(S)),
\]
and the \emph{liftable mapping class group} of $X$ by
\[
    \LMap(X;B):=\rho_{X;B}(\LHomeo(X;B)).
\]
Here $\rho_S$ and $\rho_{X,B}$ denote the corresponding projection from the homeomorphism group of the surface to its mapping class groups as defined in (\ref{Eq:NaturalProjection}). Through this paper, we denote by $D$ to the image of $\Deck(p)$ under $\rho_S$, i.e., $D:=\rho_S(\Deck(p))$. 
\end{definition}

Notice that two homeomorphisms of $S$ are identified in $\SMap(S)$ if they diﬀer by an isotopy that is not necessarily ﬁber-preserving. Therefore, the group of homomorphims defined in (\ref{eqn:Phi:Homeo's}) $\widetilde{\Phi}\colon \SHomeo(S) \to \LHomeo(X;B)$  a priori does not induce a well-defined homomorphism $\Phi\colon \SMap(S)\to \LMap(X;B)$. \emph{Birman-Hilden theory} studies which conditions on the branch covering map $p$ ensure the existence of a well-defined homomorphism $\Phi$.

\begin{definition}[Birman--Hilden property]
Let $p:S\to X$ be a branched covering map between topological surfaces. Then $p$ has the \emph{Birman--Hilden property} if every fiber-preserving homeomorphism of $S$ that is isotopic to $Id_S$  is also isotopic to $Id_S$ through fiber-preserving homeomorphisms, where $Id_S$ denotes the identity map on $S$.  
\end{definition}

\begin{remark}\label{Rmk:NonExample:BH} Some examples of branched covers that do not satisfy the Birman–Hilden property are given below. See also \cite[Section 3.2]{Winarski2015} and references therein. 

\begin{enumerate}
    \item Let $S$ be the torus $T^2$, and consider the branched covering map $p : T^2 \to X$ corresponding to the hyperelliptic involution of the torus; see  \cite[Section 2]{MW21BH}.

    \item Another example can be constructed from the orientable double cover of the Klein bottle $\mathbb{K}$. To describe it explicitly, consider the torus as $T^2 = S^1 \times S^1 \subset \mathbb{C} \times \mathbb{C}$. Define the involution $\sigma : T^2 \to T^2$ by  
    \(
    \sigma(z, w) = (e^{i\pi}z, \overline{w}),
    \)  
    for all $(z, w) \in T^2$, where $\overline{w}$ denotes the complex conjugation. Then $\mathbb{K} = T^2 / \langle \sigma \rangle$ and the projection map $p : T^2 \to \mathbb{K}$ is a covering map. Now define $f \in \Homeo(T^2)$ by
    \[
    f(z, w) := 
    \begin{cases}
    (z, z^2 w), & \text{if } \Im(z) \leqslant 0, \\
    (z, {\overline{z}}^{2} w), & \text{if } \Im(z) \geqslant 0.
    \end{cases}
    \]
    Here $ \Im(z)$ denotes the imaginary part of $z$. Note that $f$ commutes with $\sigma$ and it is an orientation-preserving homeomorphism of $T^2$. Moreover, $f$ induces the identity on $H_1(T^2; \mathbb{Z})$, the first homology group of the torus. Therefore, $f$ is isotopic to $id_{T^2}$. However, the homeomorphism $\bar{f} : \mathbb{K} \to \mathbb{K}$ induced by $f$ via the covering map $p$ does not induce the identity on $H_1(\mathbb{K}; \mathbb{Z})$, and hence $\bar{f}$ is not isotopic to $id_{\mathbb{K}}$. In conclusion, the covering map $p: T^2 \to \mathbb{K}$ does not satisfy the Birman--Hilden property.

    \item Let $S_3$ be the orientable closed surface of genus $3$. Consider the threefold simple branched cover $p:S_3 \to S^2$ over the $2$-sphere $S^2$ described in \cite[Section 3.2 and Fig. 3]{Winarski2015}, and let $q:S_3^{\text{ab}} \to S_3$ be the universal abelian cover of $S_3$, that is, it is the unbranched regular cover corresponding to the commutator subgroup of the fundamental group of $S_3$. Notice that $q$ is a characteristic cover of infinite degree, and by \cite[Corollary 4]{BasHidalgo} (see also \cite[Theorem 1.2]{BCC24}) the surface $S_3^{\text{ab}}$ is homeomorphic to the Loch Ness monster surface. Since the branched cover $p:S_3 \to S^2$ does not have the Birman–Hilden property, it follows that the composition $p \circ q:S_3^{\text{ab}} \to S^2$ is an infinitely-sheeted branched cover that does not satisfy the Birman–Hilden property.
\end{enumerate}

\end{remark}
We repeatedly use the following equivalent statements of the Birman–Hilden property. Given Proposition \ref{Prop:DectTransfIsotopicId} (from next section), this result is essentially \cite[Proposition 3.1]{MW21BH}. It  applies in the level of generality we consider, which includes infinite type surfaces and branched covering maps that may have infinitely many sheets.  In Section \ref{Sec:MainProof}, we prove Theorem \ref{BirmanHildenProperty-Generalization}  by verifying item \eqref{item:BH3} among the equivalent conditions listed below.

\begin{proposition}[Equivalent formulations of the Birman-Hilden property]\label{Prop:Equivalence:BirmanHilden}
Let $S$ be a topological surface without boundary that is either of infinite type or of finite type with negative Euler characteristic. If $p : S \to X$ is a branched covering map, then the following statements are equivalent:
\begin{enumerate}
    \item $p$ has the Birman--Hilden property.
    
    \item $\mathrm{SHomeo}(S)\cap \Homeo_0(S)$ is path-connected in the group $\Homeo_0(S)$ of all homeomorphisms of $S$ that are isotopic to $Id_S$ endowed with the compact-open topology.
    
    \item \phantomsection\label{item:BH3} The map $\Theta:\LMap(X;B) \to \SMap(S)/D$  is injective.
    
    \item \phantomsection\label{item:BH4} The map $\Phi:\SMap(S) \to \LMap(X;B)$ is well defined. 
    
    \item $\SMap(S)/D \cong \LMap(X;B)$.
\end{enumerate}
\end{proposition}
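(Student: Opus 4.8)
The statement is a cycle of equivalences, so the plan is to prove a chain of implications that closes the loop, say $(1)\Leftrightarrow(2)$, then $(1)\Rightarrow(4)\Rightarrow(3)\Rightarrow(5)\Rightarrow(1)$, or some permutation that is most economical. First I would set up the diagram relating the short exact sequence \eqref{Eq:ExactSequence:Homeo's} at the level of homeomorphism groups with the candidate sequence $1\to D\to \SMap(S)\xrightarrow{\Phi}\LMap(X;B)\to 1$ at the level of mapping class groups; here one must keep track of the fact that $\rho_S$ and $\rho_{X;B}$ are surjective onto $\SMap(S)$ and $\LMap(X;B)$ by definition, that $\widetilde\Phi$ is surjective, and that $\Deck(p)$ maps onto $D$. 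The content of Proposition \ref{Prop:DectTransfIsotopicId} (cited as forthcoming) is presumably that $\Deck(p)\cap\Homeo_0(S)$ behaves correctly, equivalently that $D$ is the honest image of $\Deck(p)$ and the sequence of mapping class groups is exact --- this is exactly what makes the reduction to \cite[Proposition 3.1]{MW21BH} go through.

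\textbf{Key steps.} For $(1)\Leftrightarrow(2)$: the Birman--Hilden property says every element of $\SHomeo(S)\cap\Homeo_0(S)$ is connected to $Id_S$ by a path inside $\SHomeo(S)$; since $\SHomeo(S)\cap\Homeo_0(S)$ is a subgroup of $\Homeo_0(S)$, being able to connect every point to the identity within it is precisely path-connectedness, so this equivalence is essentially a restatement once one notes that ``isotopic through fiber-preserving homeomorphisms'' is the same as ``lies in the path-component of $Id_S$ in $\SHomeo(S)$'' and that $\SHomeo(S)\cap\Homeo_0(S)$ carries the subspace topology from $\Homeo_0(S)$. For $(4)\Leftrightarrow(1)$: the map $\Phi\colon\SMap(S)\to\LMap(X;B)$ is well defined iff whenever $f,g\in\SHomeo(S)$ satisfy $\rho_S(f)=\rho_S(g)$ one has $\rho_{X;B}(\widetilde\Phi(f))=\rho_{X;B}(\widetilde\Phi(g))$; by a group-translation argument (replace $g^{-1}f$ by $f$) this reduces to: $f\in\SHomeo(S)$ with $f\simeq Id_S$ implies $\bar f\simeq Id_X$ rel $B$. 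The forward direction of this is immediate from the Birman--Hilden property (a fiber-preserving isotopy from $f$ to $Id_S$ projects to an isotopy from $\bar f$ to $Id_X$); the reverse direction requires the exact sequence \eqref{Eq:ExactSequence:Homeo's} together with Proposition \ref{Prop:DectTransfIsotopicId} to conclude that if $\bar f\simeq Id_X$ rel $B$ then $f$ differs from a deck transformation isotopic to $Id_S$, hence $f$ itself is fiber-preserving isotopic to $Id_S$. For $(4)\Leftrightarrow(3)$: when $\Phi$ is well defined it is surjective with kernel containing $D$, inducing $\overline\Phi\colon\SMap(S)/D\to\LMap(X;B)$; the map $\Theta$ of item \eqref{item:BH3} is its set-theoretic inverse candidate, and $\Theta$ injective $\Leftrightarrow$ $\overline\Phi$ well defined and bijective $\Leftrightarrow$ $\Phi$ well defined (using that $\ker\Phi=D$, which again is where Proposition \ref{Prop:DectTransfIsotopicId} enters). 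Finally $(3)\Leftrightarrow(5)$ is formal: $\Theta$ is always surjective (every symmetric mapping class is the image of a liftable one, since $\widetilde\Phi$ is surjective on homeomorphisms), so injectivity of $\Theta$ is the same as $\Theta$ being an isomorphism $\LMap(X;B)\xrightarrow{\sim}\SMap(S)/D$.

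\textbf{Main obstacle.} The routine part is the diagram-chasing; the genuine content — and the step I expect to be the crux — is the direction ``$\bar f$ isotopic to $Id_X$ rel $B$ $\Rightarrow$ $f$ isotopic to $Id_S$ through fiber-preserving homeomorphisms'' fails in general, but the \emph{well-definedness} reformulations only need the statement that $\ker(\Phi)=D$ (not more), and establishing $\ker\Phi\subseteq D$ rigorously at this level of generality — infinite type, infinitely many sheets, possibly non-orientable — is exactly what Proposition \ref{Prop:DectTransfIsotopicId} is designed to supply. Concretely, the subtlety is that a fiber-preserving $f$ with $\bar f$ isotopic to the identity projects via $\widetilde\Phi$ to something isotopic to the identity in $\LHomeo(X;B)$, but to conclude $f\in D\cdot(\SHomeo(S)\cap\Homeo_0(S))$ one needs that lifting an isotopy of $X$ rel $B$ yields a fiber-preserving isotopy of $S$ — which uses the isotopy extension / path-lifting properties of branched covers in the infinite-type setting, the place where one must be careful that nothing pathological happens at infinity. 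I would isolate this as the one lemma to prove with care and treat everything else as bookkeeping around the two short exact sequences.
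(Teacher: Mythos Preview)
Your proposal is correct and matches the paper's approach. The paper does not give an explicit proof of this proposition: it simply records that, granted Proposition~\ref{Prop:DectTransfIsotopicId} (a deck transformation isotopic to the identity \emph{is} the identity), the result is ``essentially \cite[Proposition 3.1]{MW21BH}''. Your plan recovers exactly this structure --- the implications among (1)--(5) are formal diagram-chasing between the short exact sequence \eqref{Eq:ExactSequence:Homeo's} and its image under $\rho$, and the one genuinely nontrivial input is that $\Deck(p)\cap\Homeo_0(S)=\{Id_S\}$ (equivalently $\ker\Phi=D$), which is precisely Proposition~\ref{Prop:DectTransfIsotopicId}. Your flagged concern about lifting isotopies through infinite-sheeted branched covers is legitimate but routine: the homotopy lifting property for the unbranched part $p|_{S^\ast}$ is independent of the number of sheets, and the extension over the discrete set $p^{-1}(B)$ is local.
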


\begin{remark}
    In Birman-Hilden theory, the branch locus $B$ and its preimage $p^{-1}(B)$ are treated differently. The branch points of the branched covering map $p:S\to X$ are considered as marked points on the surface $X$; this is because any liftable homeomorphism should leave the set of branch points invariant. However, points in $p^{-1}(B)$ are considered as ordinary points in the surface $S$, not as marked points. We make this distinction explicit in the notation. 
\end{remark}

As we will see from Proposition \ref{Prop:DectTransfIsotopicId}, we have that $D\cong \Deck(p)$ when $S$ is a topological surface without boundary that is either of infinite type or it has negative Euler characteristic. With this identification, it follows from Proposition \ref{Prop:Equivalence:BirmanHilden} that the branched cover $p : S \to X$ satisfies the Birman-Hilden property if and only if the short exact sequence (\ref{Eq:ExactSequence:Homeo's}) induces a short exact sequence
\begin{equation} \label{Eq:ExactSequence:MCG's}
        1\rightarrow \Deck(p) \rightarrow \SMap(S) \xrightarrow{\Phi} \LMap(X;B) \rightarrow 1.
\end{equation}

%%%%%%%%%%%%%%%%%%%%%
%%%%%%%%%%%%%%%%%%%%%
\section{Klein surfaces and auxiliary results}\label{Sec:Klein}
In this section, we prove some auxiliary results that are needed to obtain Theorem \ref{BirmanHildenProperty-Generalization} in Section \ref{Sec:MainProof}. In order to state our results in a way that applies in general for non-orientable surfaces, we need to move to the category of Klein surfaces which includes the category of Riemann surfaces.

\subsection{Klein surfaces and some double covers}\label{SubSec:KleinSurfaces} 
A \emph{Klein surface} is a topological surface $\Sigma$, possibly with non-empty boundary, with a \emph{dianalytic} structure, that is, a structure in which the transition functions are either analytic or anti-analytic. 
Since we are considering surfaces possibly with non-empty boundary, the notion of dianalyticity of a function is taken in its extended form admitting open subsets of the positive hyperbolic semi-plane $\mathbb{H}^+:=\{a+bi\in \mathbb{C} \mid b\geq 0 \}$ as domain.  A Klein surface might be orientable or not. If it is orientable, and the dianalytic structure is such that all changes of coordinates are analytic, then it is called a \emph{Riemann surface}.   We refer the reader to see \cite[Chapter 1]{alling2006} for a preliminary study of Klein surfaces and morphisms between Klein surfaces.  Given a Klein surface $\Sigma$, we denote by $\mathrm{Aut}(\Sigma)$ the \emph{automorphism group} of $\Sigma$. 

Any topological surface, possibly with non-empty boundary, admits a Klein surface structure. It is a consequence of the existence of isothermal coordinates in dimension two, see for instance \cite[Section 1.5.1 and Theorem 4.24]{IT92Teich}. 

By the Poincaré-Koebe Uniformization Theorem, every Klein surface $\Sigma$ can be represented as $\mathcal{U}/\Gamma$ where $\mathcal{U}$ is a simply connected Riemann surface and $\Gamma$ is a discrete subgroup of isometries of $\mathcal{U}$, without elliptic elements, that might contain reflections whose set of fixed points are projected to the boundary components of $\Sigma$. Notice that if $\Sigma$ is non-orientable or it has non-empty boundary, then $\Gamma$ necessarily has anti-analytic elements. By a slight abuse of notation, we will often refer to the Klein surface simply as $\Sigma$. \\

The following constructions will be used in our arguments below.\\

\noindent {\bf The complex double cover}. Suppose $\Sigma={\mathcal U}/\Gamma$ is non-orientable or $\partial \Sigma$ is not empty. Then $\Gamma$ must have anti-analytic automorphisms. Let $F=\Gamma^{+}$ be the index two subgroup of $\Gamma$ consisting of the analytic elements of $\Gamma$. The quotient space $R:=\mathcal{U}/F$ is a Riemann surface without boundary, and the inclusion $F \lhd \Gamma$ induces a degree two, possibly branched, covering map $f:R \to \Sigma$, with Deck group generated by an anti-analytic involution $\tau:R\to R$. This involution has fixed points if and only if $\Sigma$ has a non-empty boundary. In particular, the cover $f$ is unbranched outside the image under $f$ of the set of fixed points of $\tau$. The surface $R$ is called the \emph{complex double cover} of $\Sigma$; for further details we refer the reader to the proof of \cite[Theorem 1.6.1]{alling2006}. We will use the complex double cover of a Klein surface in the proof of Theorem \ref{Prop:AutIsDiscrete}.

If $\Sigma$ is non-orientable and $\partial \Sigma = \emptyset$, then the complex double cover of $\Sigma$ coincides with its so-called \emph{orientable double cover}. It is also possible to define the orientable double cover of a surface with non-empty boundary, which differs from the complex double cover in this situation; we refer the reader to \cite[Theorem 1.6.7]{alling2006} for a detailed explanation. We consider the orientable double cover of a non-orientable surface with empty boundary in Section \ref{sec:orientable double cover}.\\

\noindent {\bf The Schottky double cover}. 
 Let $\Sigma={\mathcal U}/\Gamma$ be a Klein surface with non-empty boundary. 
The {\it Schottky double cover} $\widehat{\Sigma}$ of $\Sigma$, as defined in \cite{alling2006,Costa2012}, is a Klein surface with empty boundary and of the same orientability type as $\Sigma$. 

As $\Sigma$ has non-empty boundary, $\Gamma$ contains reflections, and it contains index two subgroups containing none of its reflections. One of these index two subgroups, say $K$, satifies that $\widehat{\Sigma}={\mathcal U}/K$.
So, there is a degree two branched covering map $f:\widehat{\Sigma} \to \Sigma$ with Deck group $G=\Gamma/K$ generated by an involution $\tau$. The image under $f$ of the set of fixed points of $\tau$ coincides with the boundary of $\Sigma$, and $\widehat{\Sigma}$ is described as the disjoint union of two copies of $\Sigma$ identifying their boundaries. The connected components of the set of fixed points of $\tau$ are called its \emph{ovals}.

Our argument to prove Lemma \ref{Lemma:BranchedCoverOfTheAnnulusStrip} only uses the Schottky double cover of the closed infinite strip\footnote{The \emph{closed infinite strip} is the surface homeomorphic to $\mathbb{R} \times [0,1]$.}, the closed annulus, and finite type hyperbolic Klein surfaces. For these cases, we describe next the subgroup $K$, the surface $\widehat{\Sigma}$ and the Deck group $G$.

\begin{enumerate}
    \item Let $\Sigma$ be a closed infinite strip. In this case, ${\mathcal U}={\mathbb C}$ and $\Gamma=\langle a,b \rangle \cong {\mathbb Z}_{2} * {\mathbb Z}_{2}$, where $a(z)=-\overline{z}$ and $b(z)=-\overline{z}+2$. We set $K=\langle ab \rangle \cong {\mathbb Z}$; which is of index two in $\Gamma$. The Schottky double cover $\widehat{\Sigma}={\mathcal U}/K$ is the punctured plane (the sphere minus two points), and the group $G$ is generated by the reflection $z \mapsto 1/\overline{z}$.

    \item Now suppose $\Sigma$ is the closed annulus. In this case, ${\mathcal U}={\mathbb C}$ and $\Gamma=\langle a,b,e \rangle$, where $a(z)=-\overline{z}$, $b(z)=-\overline{z}+r$, where $r>0$, and $e(z)=z+i$. We set $K=\langle ab,e \rangle \cong {\mathbb Z}^2$; which is of index two in $\Gamma$. The Schottky double cover $\widehat{\Sigma}={\mathcal U}/K$ is the torus, and $G$ is generated by an involution with two ovals.

    \item  Let $\Sigma$ be a hyperbolic Klein surface of finite type with non-empty boundary. There are two cases to consider, depending on the orientability type of $\Sigma$.

    (i) If $\Sigma$ is non-orientable of genus $g \geq 1$ and exactly $k\geq 1$ boundary components with $g-2+k>0$, then $\Gamma$ is generated by $g$ glide-reflections $d_{1},\ldots,d_{g}$, $k$ reflections $c_{1},\ldots,c_{k}$, and $k$ hyperbolic elements $e_{1},\ldots,e_{k}$, satisfying the following relations:
    $c_{i}^{2}=1$, $e_{i}c_{i}e_{i}^{-1}c_{i}=1$, and $e_{1}\cdots e_{k} d_{1}^{2}d_{2}^{2} \cdots d_{g}^{2}=1$. Consider the surjective homomorphism $\theta:\Gamma \to {\mathbb Z}_{2}=\langle \tau \rangle$, defined by $\theta(c_{i})=\tau$, $\theta(e_{i})=\theta(d_{j})=1$ for all $i,j$. Then $K:=\ker(\theta)$ has index two in $\Gamma$.

    (ii) If $\Sigma$ is orientable of genus $g\geq 0$ and $k\geq 1$ boundary components such that $2g-2+k >0 $, then $\Gamma$ is generated by $2g$ hyperbolic elements, $a_{1},b_{1},\ldots,a_{g},b_{g}$, 
    $k$ reflections $c_{1},\ldots,c_{k}$, and $k$ hyperbolic elements $e_{1},\ldots,e_{k}$, satisfying the following relations:
    $c_{i}^{2}=1$, $e_{i}c_{i}e_{i}^{-1}c_{i}=1$, and $e_{1}\cdots e_{k} [a_{1},b_{1}] \cdots [a_{g},b_{g}]=1$. 
    In this case, we define similarly $\theta$ by sending $a_{j}$, $b_{j}$, $e_{i}$ to $1$, and $c_{i}$ to $\tau$ for all $i,j$, and set $K$ as its kernel.

 In these cases, the Schottky double cover is $\widehat{\Sigma}:={\mathcal U}/K$, and $G:=\Gamma/K \cong {\mathbb Z}_{2}$ is generated by an involution with exactly $k$ ovals. 
\end{enumerate}

The next result shows that any automorphism of a Klein surface isotopic to the identity must be equal to the identity.

\begin{thm}\label{Prop:AutIsDiscrete}
 Let  $\Sigma$ be a Klein surface, possibly with non-empty boundary, either of infinite type or of finite type with negative Euler characteristic. If $\varphi\in \mathrm{Aut}(\Sigma)$ is isotopic to the identity $Id_{\Sigma}$ then $\varphi=Id_{\Sigma}$.
\end{thm}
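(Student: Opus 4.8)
The plan is to reduce the statement to a rigidity fact about discrete groups acting on simply connected Riemann surfaces, using the Poincar\'e--Koebe uniformization $\Sigma=\mathcal{U}/\Gamma$ together with the complex double cover when $\Sigma$ is non-orientable or has non-empty boundary. First I would handle the orientable case with empty boundary: here $\Sigma$ is a Riemann surface and $\mathcal{U}$ is either $\mathbb{C}$ or $\mathbb{H}$ (the sphere is excluded by the Euler characteristic / infinite-type hypothesis). Any $\varphi\in\Aut(\Sigma)$ lifts to an automorphism $\widetilde{\varphi}$ of $\mathcal{U}$ normalizing $\Gamma$; and if $\varphi$ is isotopic to the identity, then $\varphi$ acts trivially on $\pi_1(\Sigma)=\Gamma$ up to conjugation, so $\widetilde{\varphi}$ commutes with every element of $\Gamma$ after adjusting the lift. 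Since $\Gamma$ is a non-elementary discrete group (this is where negative Euler characteristic / infinite type enters: $\Gamma$ is non-abelian, in fact contains a non-abelian free group), its centralizer in $\Aut(\mathcal{U})$ is trivial, forcing $\widetilde{\varphi}=Id$ and hence $\varphi=Id_\Sigma$. The parabolic/puncture case and the case $\mathcal{U}=\mathbb{C}$ (so $\Sigma$ is a punctured plane or torus — but the torus is excluded, and the punctured plane is excluded as it has a sporadic-type fundamental group) need a separate check, but the only genuinely elementary $\Gamma$'s are ruled out by hypothesis, so this is a short verification rather than the crux.

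Next I would reduce the general (non-orientable and/or bordered) case to the one just treated via the complex double cover $f:R\to\Sigma$ of Section \ref{SubSec:KleinSurfaces}, whose Deck group is generated by an anti-analytic involution $\tau$. Given $\varphi\in\Aut(\Sigma)$ isotopic to $Id_\Sigma$, I would lift it to $R$: since $f$ is a (possibly branched) degree-two cover and $\varphi$ preserves the relevant structure, there is a lift $\widehat{\varphi}\in\Aut(R)$ with $f\circ\widehat{\varphi}=\varphi\circ f$, unique up to composition with $\tau$; choose the lift fixing a chosen point in a fixed fiber. The key point is that an isotopy from $\varphi$ to $Id_\Sigma$ lifts to an isotopy from $\widehat{\varphi}$ to either $Id_R$ or $\tau$; since $\widehat{\varphi}$ is analytic and $\tau$ is anti-analytic, and $R$ is connected, $\widehat{\varphi}\neq\tau$, so $\widehat{\varphi}$ is isotopic to $Id_R$. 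Now $R$ is a Riemann surface without boundary; I must check it is still either of infinite type or of finite type with negative Euler characteristic — this follows because the double cover of a surface satisfying our hypotheses again satisfies them (the only dangerous small cases — sphere, disc, annulus, projective plane, M\"obius band, torus, punctured plane — are precisely the ones excluded by the hypothesis on $\Sigma$, or would force $\Sigma$ itself to be excluded). Applying the orientable case gives $\widehat{\varphi}=Id_R$, hence $\varphi=Id_\Sigma$.

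The main obstacle, I expect, is bookkeeping in the non-orientable/bordered reduction: making sure the lift $\widehat{\varphi}$ exists as an \emph{automorphism} (not merely a homeomorphism) of $R$ — this needs that $\varphi$, being an automorphism of the Klein surface $\Sigma$, pulls back the dianalytic structure correctly — and, more delicately, verifying that the isotopy $\varphi\simeq Id_\Sigma$ genuinely lifts to an isotopy $\widehat{\varphi}\simeq Id_R$ rather than to a path ending at $\tau$. The clean way to do the latter is the covering-space / path-lifting argument in $\Homeo(R)$ combined with the discreteness of $\Deck(f)=\{Id_R,\tau\}$, or equivalently to note that the action on $\pi_1$ already distinguishes the two lifts. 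A secondary subtlety is the branched case (when $\partial\Sigma\neq\emptyset$, the involution $\tau$ on $R$ has fixed points mapping to $\partial\Sigma$): there one must observe that $\varphi$ restricts to the identity on $\partial\Sigma$ (or argue with the Schottky double cover instead, where $\widehat{\Sigma}$ is borderless), so the lifting still behaves well along the branch locus. Everything else — the triviality of centralizers of non-elementary Fuchsian/Kleinian groups, and the exclusion of the finitely many elementary exceptions by the Euler-characteristic hypothesis — is standard.
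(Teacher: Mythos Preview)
Your proposal is correct and follows the same global architecture as the paper: first reduce the non-orientable or bordered case to the Riemann-surface-without-boundary case via the complex double cover, then lift $\varphi$ to an automorphism $\widetilde{\varphi}$ of the universal cover and show this lift is the identity. The reduction step you sketch is essentially identical to the paper's. The one substantive difference is the mechanism for forcing $\widetilde{\varphi}=Id_{\mathbb{H}^2}$: you adjust the lift so that it \emph{centralizes} $\Gamma$ (using that $\varphi\simeq Id_\Sigma$ acts as an inner automorphism on $\pi_1$) and then invoke the classical fact that a non-elementary Fuchsian group has trivial centralizer in $\Aut(\mathbb{H}^2)$. The paper instead adjusts the lift to be isotopic to $Id_{\mathbb{H}^2}$, cites Tappu to extend $\widetilde{\varphi}$ continuously to the limit set $\Lambda(\Gamma)$ with $\partial\widetilde{\varphi}=Id$ there, and then uses that a M\"obius transformation fixing more than three boundary points is the identity. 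Your route is slightly more elementary and self-contained (no external reference needed, and the role of ``$\Gamma$ non-abelian'' is transparent); the paper's route foregrounds the boundary dynamics and the limit set. Both are standard and both work.
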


\begin{proof}
Observe that it is enough to show the result when $\Sigma$ is a Riemann surface with empty boundary. Indeed, suppose $\Sigma$ is a Klein surface but not a Riemann surface and suppose the result is true for Riemann surfaces with empty boundary. Let $f:R\to \Sigma$ be the complex double cover of $\Sigma$. We recall that by construction, $R$ is a Riemann surface with empty boundary. Now, we can choose an analytic lift $\widetilde{\varphi}:R\to R$ of $\varphi$ isotopic to $Id_{R}$. Given that $R$ is a Riemann surface, we get that $\widetilde{\varphi}=Id_{R}$ and therefore $\varphi=Id_{\Sigma}$.

Now suppose $\Sigma$ is a Riemann surface with universal cover $\pi:\mathbb{H}^2\to \Sigma$ and Deck group $\Gamma$ which is a discrete and torsion-free subgroup (i.e., a \emph{Fuchsian group}) of $\Aut(\mathbb{H}^2)$ such that $\Sigma$ is isometric to the quotient space $\mathbb{H}^2/\Gamma$. Take $\widetilde{\varphi}:\mathbb{H}^2 \to \mathbb{H}^2$ a lift  of $\varphi$. Since $\varphi$ is isotopic to $Id_{\Sigma}$, up to precomposition of $\widetilde{\varphi}$ with an element of $\Gamma$, we can assume that $\widetilde{\varphi}$ is isotopic to $Id_{\mathbb{H}^2}$. Notice that $\widetilde{\varphi}$ is an automorphism of $\mathbb{H}^2$ that is contained in the normalizer of $\Gamma$ in $\mathrm{Aut}(\mathbb{H}^2)$. Therefore, $\widetilde{\varphi}$ preserves the type (hyperbolic or parabolic) of elements of the Deck group. Following Tappu's argument in the proof of \cite[Proposition 4.1]{Tappu2023}, $\widetilde{\varphi}$ has a continuous extension $\widetilde{\varphi} \cup \partial \widetilde{\varphi}:\mathbb{H}^2\cup \overline{ \Lambda(\Gamma)}\to \mathbb{H}^2\cup \overline{ \Lambda(\Gamma)}$ where $\Lambda(\Gamma)\subseteq \mathbb{S}^1$ denotes the \emph{limit set} of $\Gamma$. Moreover, because  $\widetilde{\varphi}$ is a Möbius transformation, the extension $\partial \widetilde{\varphi}$ coincides with the natural action of $\widetilde{\varphi}$ on $\mathbb{S}^1$ restricted to $\Lambda(\Gamma)$. Now, the fact that $\widetilde{\varphi}$ is isotopic to the identity implies that $\partial \widetilde{\varphi}$ is the identity on $\Lambda(\Gamma)$, see \cite[Proposition 4.1 item (4)]{Tappu2023}.  Given the surface $\Sigma$ is of infinite type or of finite type with negative Euler characteristic, the limit set of $\Gamma$ has more than three points, in fact, it is infinite. Therefore, we obtain that $\widetilde{\varphi}$ is necessarily the identity on $\mathbb{H}^2$ and therefore $\varphi=Id_{\Sigma}$.
\end{proof}

%%%%%%%%%%%%%%%%%%%%%%%%
\subsection{Klein surfaces and branched covers}
Let $p \colon S \to X$ be a branched covering map between surfaces. For any dianalytic structure on $X$ we can pullback the Klein structure on $S$ such that the covering $p\colon S\to X$ is a dianalytic map, see  \cite[Corollary 1.4.5 $\&$ Theorem 1.5.2]{alling2006}. Particularly, this gives us a local isometry between the two Klein surfaces. 

\begin{proposition}\label{Prop:DectTransfIsotopicId}
Let $S$ be a topological surface, possibly with non-empty boundary, that is either of infinite type or of finite type with negative Euler characteristic. Consider a branched covering map  $p \colon S \to X$, possibly with infinitely many sheets.  If a deck transformation $\varphi \colon S \to S$ is isotopic to the identity $\mathrm{Id}_S$, then $\varphi = \mathrm{Id}_S$. In particular, $D \cong \Deck(p)$.

\end{proposition}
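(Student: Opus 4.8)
The plan is to reduce the statement to Theorem \ref{Prop:AutIsDiscrete} by promoting $\varphi$ to an automorphism of a Klein surface. First I would fix an arbitrary dianalytic structure on $X$ — which exists since every topological surface admits one — and pull it back along $p$, obtaining a dianalytic structure on $S$ for which $p\colon S\to X$ is a morphism of Klein surfaces; this is \cite[Corollary 1.4.5 and Theorem 1.5.2]{alling2006}. Writing $S^\ast=S\setminus p^{-1}(B)$ and $X^\ast=X\setminus B$ as in the definition of a branched cover, the restriction $p|_{S^\ast}\colon S^\ast\to X^\ast$ is then an honest covering map of Klein surfaces, and $p$ is a local isometry away from $p^{-1}(B)$.

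Next I would verify that $\varphi$ is an automorphism of the Klein surface $S$. By definition, $\varphi$ restricts on $S^\ast$ to a deck transformation of $p|_{S^\ast}$, so $p\circ\varphi=p$ there; since a lift of a dianalytic map through a dianalytic covering is again dianalytic, $\varphi|_{S^\ast}\in\Aut(S^\ast)$. Now $p^{-1}(B)$ is closed and discrete in $S$, and $\varphi$ is a self-homeomorphism of $S$ that is dianalytic on the complement of this set; by the removable singularity theorem for dianalytic maps, $\varphi$ is dianalytic on all of $S$, i.e. $\varphi\in\Aut(S)$. (Concretely, near a point $\tilde b\in p^{-1}(B)$ one uses the local normal form $z\mapsto z^{m}$ of $p$: the point $\varphi(\tilde b)$ lies in the same fiber and has the same ramification number, so in these coordinates $\varphi$ extends continuously, hence dianalytically, across $\tilde b$.) I expect this dianalytic extension across the ramification preimages to be the only point requiring care; everything else is formal.

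Finally, since $S$ is by hypothesis either of infinite type or of finite type with negative Euler characteristic, Theorem \ref{Prop:AutIsDiscrete} applies to $\varphi\in\Aut(S)$: as $\varphi$ is isotopic to $\mathrm{Id}_S$, we conclude $\varphi=\mathrm{Id}_S$. For the last assertion, the projection $\Deck(p)\to D=\rho_S(\Deck(p))$ is surjective by definition, and its kernel is trivial: an element of $\Deck(p)$ whose image is the class of $\mathrm{Id}_S$ in $\Map(S)$ is, by definition of $\rho_S$, isotopic to $\mathrm{Id}_S$, hence equal to $\mathrm{Id}_S$ by what was just shown. Therefore $D\cong\Deck(p)$.
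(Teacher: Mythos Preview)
Your proposal is correct and follows essentially the same route as the paper: endow $X$ with a dianalytic structure, pull it back along $p$ so that $p$ becomes dianalytic, observe that every deck transformation is then an automorphism of the Klein surface $S$, and invoke Theorem~\ref{Prop:AutIsDiscrete}. The paper's proof is terser---it simply asserts that $\Deck(p)$ acts by automorphisms---whereas you spell out the removable-singularity argument across $p^{-1}(B)$ and the final bijection $\Deck(p)\to D$; these are just elaborations of the same idea.
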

\begin{proof}
 Endow both $X$ and $S$ with dianalytic structures such that the covering map $p \colon S \to X$ becomes a dianalytic map. In this setting, the group $\Deck(p)$ acts by automorphisms of the Klein surface $S$. Therefore, by Theorem \ref{Prop:AutIsDiscrete}, any deck transformation $\varphi$ isotopic $\mathrm{Id}_S$ must be the identity.
\end{proof}

Given a topological surface $\Sigma$ consider the projection  $\rho:\Homeo(\Sigma)\to \Map(\Sigma)$ from (\ref{Eq:NaturalProjection}).

\begin{definition}
Let $G$ be a subgroup of $\Map(\Sigma)$. We say that $G$ is \emph{realizable} if $\Sigma$ admits a dianalytic structure such that there exists a subgroup $\widetilde{G}$ of $\Aut(\Sigma)\subset \Homeo(\Sigma)$ isomorphic to $G$ that satisfies $\rho(\widetilde{G})=G$.
\end{definition}

\begin{remark}
    If $\Sigma$ is a Klein surface that has negative Euler characteristic or is of infinite type, the group $\Aut(\Sigma)$ acts properly discontinuously on $\Sigma$ (see \cite[Theorem 3.1.4]{DelecroixValdez2024}), and therefore it is countable. Hence, any realizable subgroup of $\Map(\Sigma)$ is countable. 
\end{remark}

\begin{proposition}\label{Prop:KernelThetaTorsionFree}
Let $S$ be a topological surface that is either of infinite type or of finite type with negative Euler characteristic. Consider a branched covering map  $p \colon S \to X$  possibly with infinitely many sheets,  and suppose that every finite cyclic subgroup of $\LMap(X;B)$ is realizable. Then the kernel of $\Theta:\LMap(X;B) \to \SMap(S)/D$ is torsion-free.
\end{proposition}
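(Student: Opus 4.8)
The plan is to argue by contradiction: suppose $[f] \in \LMap(X;B)$ is a nontrivial element of finite order $n$ lying in $\ker\Theta$, and derive a contradiction by producing a nontrivial deck transformation isotopic to the identity, which is forbidden by Proposition \ref{Prop:DectTransfIsotopicId}. First I would use the hypothesis that every finite cyclic subgroup of $\LMap(X;B)$ is realizable: the cyclic group $\langle [f] \rangle \cong \mathbb{Z}/n$ is realized by a finite subgroup $\langle \varphi \rangle \subset \Aut(X) \subset \LHomeo(X;B)$ with $\rho_{X;B}(\varphi) = [f]$ and $\varphi^n = \mathrm{Id}_X$ on the nose (not merely up to isotopy). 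Here one should check that the realized automorphism $\varphi$ is indeed liftable; since $[f]$ is liftable and liftability is a property of the mapping class (it is detected by the action on $\pi_1$ of the punctured base surface), any representative of $[f]$ is liftable, so $\varphi \in \LHomeo(X;B)$.

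Next I would lift: since $\varphi$ is liftable, choose $\psi \in \SHomeo(S)$ with $\widetilde{\Phi}(\psi) = \varphi$. Then $\psi^n$ lies in $\ker\widetilde\Phi = \Deck(p)$ by the short exact sequence \eqref{Eq:ExactSequence:Homeo's}. Now I use the assumption $[f] \in \ker\Theta$: this means $\Theta([f]) = [\psi]\,D$ is trivial in $\SMap(S)/D$, i.e.\ there is a deck transformation $\delta \in \Deck(p)$ with $\rho_S(\psi) = \rho_S(\delta)$, so $\psi' := \delta^{-1}\psi$ is a fiber-preserving homeomorphism of $S$ that is isotopic to $\mathrm{Id}_S$ and still satisfies $\widetilde\Phi(\psi') = \varphi$ (since $\widetilde\Phi(\delta)=\mathrm{Id}_X$). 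Thus after replacing $\psi$ by $\psi'$ we may assume $\psi \in \SHomeo(S) \cap \Homeo_0(S)$, i.e.\ $\psi$ is isotopic to the identity, while $\widetilde\Phi(\psi) = \varphi$ has order exactly $n$ in $\LHomeo(X;B)$.

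The final step is to reach the contradiction. Since $\psi$ is isotopic to $\mathrm{Id}_S$, so is $\psi^n$; but $\psi^n \in \Deck(p)$, and by Proposition \ref{Prop:DectTransfIsotopicId} a deck transformation isotopic to the identity must equal $\mathrm{Id}_S$, so $\psi^n = \mathrm{Id}_S$. Applying $\widetilde\Phi$ gives $\varphi^n = \mathrm{Id}_X$, which we already knew; the real point is instead to observe that $\langle \psi \rangle$ is now a (possibly trivial) finite group mapping onto $\langle \varphi \rangle \cong \mathbb{Z}/n$ with kernel inside $\Deck(p)$, and that $\psi$, being isotopic to the identity, has $\rho_S(\psi)$ trivial; projecting the isotopy down through $p$ — which is legitimate because $\psi$ is fiber-preserving, so the isotopy descends (this is exactly where one cannot conclude without care) — would force $\varphi$ itself to be isotopic to $\mathrm{Id}_X$, contradicting that $\varphi$ has order $n>1$ and, being a nontrivial automorphism of a Klein surface of the stated type, cannot be isotopic to the identity by Theorem \ref{Prop:AutIsDiscrete}. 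I expect the main obstacle to be this descent of the isotopy: an isotopy from $\psi$ to $\mathrm{Id}_S$ need not be fiber-preserving, so one cannot naively push it down to $X$. The clean way around this, and the argument I would actually write, is to avoid pushing the isotopy down altogether: instead note $\psi$ isotopic to $\mathrm{Id}_S$ gives $\rho_S(\psi^k) $ trivial for all $k$, hence in particular $\psi$ has infinite order unless $\psi$ itself is the identity (again by Proposition \ref{Prop:DectTransfIsotopicId} applied to the subgroup $\langle \psi^n\rangle \subset \Deck(p)$, plus the fact that $\Aut(S)$ acts properly discontinuously so $\psi$, if realized, generates a finite or infinite discrete group) — and a finite-order fiber-preserving homeomorphism isotopic to the identity whose image $\varphi$ has order $n$ is impossible since $\psi^n=\mathrm{Id}_S$ forces $\psi$ to have order dividing $n$, so $\langle\psi\rangle$ is finite, realizable as automorphisms, hence $\psi=\mathrm{Id}_S$ by Theorem \ref{Prop:AutIsDiscrete}, whence $\varphi=\mathrm{Id}_X$ and $n=1$, the desired contradiction.
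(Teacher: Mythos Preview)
Your overall strategy matches the paper's: realize the finite cyclic subgroup by an automorphism $\varphi \in \Aut(X)$, lift it, adjust the lift by a deck transformation so that it is isotopic to $\mathrm{Id}_S$, and then invoke the rigidity of automorphisms (Theorem~\ref{Prop:AutIsDiscrete}) to conclude the lift equals $\mathrm{Id}_S$, forcing $\varphi = \mathrm{Id}_X$.

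The gap is in your final paragraph. You never establish that the adjusted lift $\psi$ is an \emph{automorphism} of a Klein structure on $S$, which is exactly what is needed to apply Theorem~\ref{Prop:AutIsDiscrete}. Your attempt to repair this by first showing $\psi^n = \mathrm{Id}_S$ and then declaring $\langle \psi \rangle$ ``realizable as automorphisms'' is not justified: the realizability hypothesis in the statement is only about $\LMap(X;B)$, not about $S$; and in the paper's sense of ``realizable'' (Definition preceding Proposition~\ref{Prop:KernelThetaTorsionFree}) the group $\langle [\psi]\rangle \subset \Map(S)$ is already trivial since $[\psi]=1$, so realizing it tells you nothing about $\psi$ itself. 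The sentence ``$\psi$ has infinite order unless $\psi$ itself is the identity'' also does not follow from anything you have established.

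The fix is the one-line observation you are missing, and it is exactly what the paper does: when you choose the dianalytic structure on $X$ for which $\varphi \in \Aut(X)$, \emph{pull it back to $S$ via $p$}. Then $p$ is dianalytic, so any lift of $\varphi$ lies in $\Aut(S)$, and every deck transformation lies in $\Aut(S)$ as well. Hence the adjusted lift $\psi$ (your $\psi' = \delta^{-1}\psi$) is an automorphism of the Klein surface $S$ that is isotopic to $\mathrm{Id}_S$, and Theorem~\ref{Prop:AutIsDiscrete} gives $\psi = \mathrm{Id}_S$ immediately. There is no need to consider $\psi^n$, to worry about descending isotopies, or to invoke any realizability on $S$.
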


\begin{proof}
Consider an element $\varphi\in\mathrm{Ker}(\Theta)$ of finite order $n$. Since every finite cyclic subgroup of $\LMap(X;B)$ is realizable, there is a dianalytic structure on $X$ and $f \in  \Aut(X)$ of order $n$ with $\varphi=[f]$.  Let $\widetilde{f}$ be a lift of $f$. By pulling back to $S$ the dianalytic structure of $X$, the covering map $p:S\to X$ is a local isometry and $\widetilde{f} \in \Aut(S)$. Up to pre-composition by a deck transformation, we can assume that $\widetilde{f}$ is isotopic to $Id_S$. Using the Proposition \ref{Prop:DectTransfIsotopicId}, we conclude that $\widetilde{f}$ coincides with $Id_S$. It follows that $f=Id_X$ and $\varphi$ must be trivial.\end{proof}

It is well known that any finite subgroup of the mapping class group of a surface of finite type is realizable (c.f. Nielsen realization). This was proven for closed orientable surfaces in \cite[Theorems 4 and 5]{Ker83}, for punctured surfaces in \cite[Theorem 6.1]{Wol87}, and for non-orientable surfaces in \cite[Theorem 1.4]{ColinXico2024}. In particular, finite cyclic subgroups are realizable. Therefore, the following result is a direct consequence of Proposition \ref{Prop:KernelThetaTorsionFree} and extends \cite[Proposition 4]{Winarski2015}. 

\begin{corollary}\label{Corollary:ThetaFreeTorsionFiniteTypeSurfaces}
    Let $p:S\to X$ be a branched cover where $S$ is of infinite type or has negative Euler characteristic and $X$ is a surface of finite type. Then the kernel of $\Theta:\LMap(X;B) \to \SMap(S)/D$ is torsion-free.
\end{corollary}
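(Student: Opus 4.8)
The plan is to invoke the Nielsen realization theorem to see that the hypothesis of Proposition \ref{Prop:KernelThetaTorsionFree} is automatically satisfied in this setting, and then to quote that proposition directly. So the first step is to observe that, since $X$ is a surface of finite type, every finite subgroup of $\Map(X;B)$ — in particular every finite cyclic subgroup — is realizable by a group of automorphisms for a suitable dianalytic (equivalently, for $X$ orientable, hyperbolic or conformal) structure on $X$. This is classical: it is due to Kerckhoff for closed orientable surfaces \cite{Ker83}, to Wolpert for punctured orientable surfaces \cite{Wol87}, and to \cite{ColinXico2024} for non-orientable surfaces. One should be slightly careful that the finite-type surface $X$ here may carry marked points (the branch locus $B$), so the statement one actually needs is Nielsen realization for $\Map(X;B)$; this is covered by the punctured/marked-point versions cited.

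The second step is purely formal: a finite cyclic subgroup $C \leq \LMap(X;B)$ is in particular a finite subgroup of $\Map(X;B)$, so by Step 1 it is realizable, i.e.\ $X$ admits a dianalytic structure and a finite subgroup $\widetilde{C} \leq \Aut(X) \subset \Homeo(X;B)$ with $\rho_{X;B}(\widetilde{C}) = C$. Hence every finite cyclic subgroup of $\LMap(X;B)$ is realizable in the sense of the definition preceding Proposition \ref{Prop:KernelThetaTorsionFree}. That proposition then applies verbatim and yields that $\ker\bigl(\Theta \colon \LMap(X;B) \to \SMap(S)/D\bigr)$ is torsion-free, which is exactly the assertion of the corollary.

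There is essentially no obstacle here — the corollary is a direct specialization of Proposition \ref{Prop:KernelThetaTorsionFree} once one feeds in Nielsen realization — so the only thing warranting care is making sure the realizability hypothesis is stated for the correct group (isotopy classes relative to $B$, with homeomorphisms restricting to the identity on $\partial X$ when $\partial X \neq \emptyset$) and citing the version of Nielsen realization that matches. No new idea about $S$ or the covering map $p$ is required: all the work about lifting automorphisms and using Theorem \ref{Prop:AutIsDiscrete} has already been absorbed into Proposition \ref{Prop:KernelThetaTorsionFree}.
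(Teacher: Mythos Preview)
Your proposal is correct and matches the paper's own argument essentially verbatim: the paper also cites Nielsen realization (Kerckhoff for closed orientable surfaces, Wolpert for punctured surfaces, and the non-orientable case) to verify the realizability hypothesis, and then invokes Proposition~\ref{Prop:KernelThetaTorsionFree} directly. The only superfluous remark is your caveat about $\partial X$, since in the paper's setting $X$ has empty boundary.
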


\begin{remark}
In \cite[Theorem 2]{ACCL21Nielsen},
Afton--Calegari--Chen--Lyman proved that infinite type orientable surfaces satisfy Nielsen realization, therefore Corollary \ref{Corollary:ThetaFreeTorsionFiniteTypeSurfaces} also holds when $X$ is an orientable surface of infinite type. To the best of our knowledge, an analogous result for infinite type non-orientable surfaces has not yet been established. However, the proof of our main result only needs Corollary \ref{Corollary:ThetaFreeTorsionFiniteTypeSurfaces} as stated, since it relies on the existence of $1$-stable Alexander systems in the infinite type setting.
\end{remark}

In Winarski's proof of \cite[Theorem 1]{Winarski2015}, a key step involves analyzing pairs of isotopic simple closed curves on the surface $S$, which bound an embedded annulus. When considering infinite-sheeted branch covers, we must also consider the case where such curves bound an infinite strip, rather than a compact annulus. 
To address this, we finish this section with a description of the structure of branched covers (possibly with infinitely many sheets) from the closed annulus or the closed infinite strip to a compact surface with two  boundary components. This result will be an essential step in the proof of Theorem \ref{BirmanHildenProperty-Generalization}.

\begin{lemma}\label{Lemma:BranchedCoverOfTheAnnulusStrip}
    Let $p:S\to X$ be a branched covering map. Suppose that $S$ is homeomorphic to the closed annulus or the closed infinite strip, and $X$ is a compact surface with two boundary components. Then $X$ is homeomorphic to the closed annulus and the map $p$ is unbranched.
\end{lemma}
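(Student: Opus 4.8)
The plan is to pass to the Schottky double cover on both sides so that we may work with closed surfaces and automorphisms, and then read off the conclusion from an Euler characteristic count together with the explicit descriptions of the Schottky doubles recorded above. First I would fix a dianalytic (Klein surface) structure on $X$ and pull it back along $p$ to make $p\colon S\to X$ a dianalytic map, so that $p$ restricts to a morphism of Klein surfaces. Since $S$ (the closed annulus or the closed infinite strip) and $X$ (a compact surface with two boundary components) both have non-empty boundary, each has a Schottky double cover; write $f_S\colon \widehat{S}\to S$ and $f_X\colon \widehat{X}\to X$ for these degree-two branched covers, with deck involutions $\tau_S$ and $\tau_X$ whose ovals project onto $\partial S$ and $\partial X$ respectively. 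Using the description recorded before the lemma, $\widehat{S}$ is either the torus (annulus case) or the twice-punctured sphere (infinite-strip case).

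The key step is to build a branched cover $\widehat{p}\colon \widehat{S}\to\widehat{X}$ lifting $p$. Because $p$ is a morphism of Klein surfaces that sends $\partial S$ into $\partial X$, one checks that $p\circ f_S\colon \widehat{S}\to X$ factors through $f_X$: the subgroup of $\pi_1$ defining $\widehat{S}$ maps into the subgroup defining $\widehat{X}$, since both are characterized as the ``orientation/reflection-killing'' index-two subgroups and $p$ respects this structure (concretely, in the uniformizing picture $S=\mathcal U/\Gamma_S$, $X=\mathcal U/\Gamma_X$ with $\Gamma_S\le\Gamma_X$, and the subgroups $K_S$, $K_X$ are the respective kernels of the reflection-parity homomorphisms, which are compatible under inclusion). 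This produces $\widehat{p}\colon\widehat{S}\to\widehat{X}$, again a branched cover, equivariant for $\tau_S,\tau_X$, and whose branch locus on $\widehat{X}$ maps under $f_X$ into the branch locus of $p$ union $\partial X$. If $p$ were fully ramified at some point over a branch point, so would $\widehat{p}$ be at the corresponding point.

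Now I would run the arithmetic. In the annulus case $\widehat{S}$ is the torus, so $\chi(\widehat{S})=0$; the Riemann--Hurwitz formula for $\widehat{p}\colon\widehat{S}\to\widehat{X}$ then forces $\chi(\widehat{X})\ge 0$ (with equality and no ramification unless the degree is infinite, which is incompatible with $\widehat{S}$ compact), and combined with $\widehat{X}$ being a closed surface that double-covers a compact surface with two boundary components, this pins down $\widehat{X}$ to be the torus or the Klein bottle and $\widehat{p}$ unbranched of finite degree; descending, $X$ is forced to be the annulus (not the Möbius band, which has one boundary component) and $p$ unbranched. In the infinite-strip case $\widehat{S}$ is the twice-punctured sphere, $\chi=0$ again after treating the punctures as marked points, and the same Riemann--Hurwitz bookkeeping shows $\widehat{p}$ can have no branch points and that $\widehat{X}$, double-covering a compact genus-considerations surface with two boundary components, must itself be planar with two ends; this again forces $X$ to be the annulus and $p$ unbranched — here one also uses that an infinite-strip cover of the annulus exists (the universal cover of the core circle) so the statement is not vacuous. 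Either way we conclude $X\cong$ closed annulus and $p$ is unbranched.

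The main obstacle I expect is the factorization step producing $\widehat{p}$: one must verify carefully that the index-two subgroups defining the two Schottky doubles are genuinely compatible under the inclusion $\Gamma_S\le\Gamma_X$ of Fuchsian(-type) groups — equivalently, that $p$ carries boundary to boundary in a way that matches up ovals with ovals — and that ramification over boundary points is controlled. Handling the infinite-strip case requires extra care since $\widehat{S}$ is non-compact, so one argues either by restricting to an exhausting sequence of compact subsurfaces and applying Riemann--Hurwitz locally, or by noting directly that a branched cover of a compact surface with infinitely many sheets cannot have $\widehat{S}$ with finitely generated (here, free of rank one) fundamental group unless it is unbranched. Once $\widehat{p}$ is in hand, the Euler characteristic counting and the classification of surfaces do the rest with no difficulty.
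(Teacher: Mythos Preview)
Your overall strategy---pass to the Schottky doubles $\widehat{S}$ and $\widehat{X}$, lift $p$ to a branched cover $\widehat{p}:\widehat{S}\to\widehat{X}$, and then argue via Euler characteristic---is exactly the route the paper takes. Two remarks on execution, and then the real gap.

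First, you are working too hard to produce $\widehat{p}$. The paper simply writes it down: view $S\subset\widehat{S}$ and $X\subset\widehat{X}$, and set $\widehat{p}(s)=p(s)$ for $s\in S$ and $\widehat{p}(s)=\mu(p(\tau(s)))$ for $s\notin S$. No factorization through uniformizing groups is needed. Second, a small slip: to rule out $\widehat{X}$ being the Klein bottle you invoke ``not the M\"obius band''; that only excludes one candidate. The correct observation (which the paper makes) is that a non-orientable compact $X$ with two boundary components has non-orientable genus $g\ge 1$, so its Schottky double has non-orientable genus $2g+2\ge 4$, hence is never the Klein bottle.

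The genuine gap is the infinite-strip case. Here $\widehat{p}:\mathbb{C}\setminus\{0\}\to\widehat{X}$ has infinite degree and Riemann--Hurwitz simply does not apply; your ``same bookkeeping'' and the assertion that $\widehat{X}$ ``must itself be planar with two ends'' are both wrong ($\widehat{X}$ is closed). Neither proposed fix works: restricting to compact subsurfaces does not yield covers to which Riemann--Hurwitz applies, and your fundamental-group heuristic does not rule out, say, an infinite branched cover of a hyperbolic closed surface whose total space has abelian $\pi_1$ without further argument. The paper handles this in two separate steps. To force $\chi(\widehat{X})=0$ it composes with the universal cover $\pi:\mathbb{C}\to\widehat{S}$ to obtain a nonconstant holomorphic map $\mathbb{C}\to\widehat{X}$ (or to its orientable double cover if $\widehat{X}$ is non-orientable); by Brody's theorem this is impossible when the target is hyperbolic, so $\widehat{X}$ is a torus. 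Then, to show $\widehat{p}$ is unbranched, the paper does \emph{not} use an Euler-characteristic count but a direct curve argument: enclose the putative branch locus in a disc $D\subset\widehat{X}$, analyze how essential curves on the complementary one-holed torus lift to the planar surface $\mathbb{C}\setminus\{0\}$, and deduce that each component of $\widehat{p}^{-1}(\partial D)$ maps homeomorphically to $\partial D$, forcing the restriction over $D$ to be a homeomorphism and hence $\widehat{B}=\emptyset$. Those two ingredients---the hyperbolicity obstruction via $\mathbb{C}$ and the curve-lifting argument over the one-holed torus---are the ideas missing from your sketch.
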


\begin{proof}
    We may endow both $X$ and $S$ with dianalytic structures such that:

\begin{enumerate}
    \item the covering map $p \colon S \to X$ becomes a dianalytic map;
    
    \item if $S$ is homeomorphic to the closed annulus, then (with the given dianalytic structure) it is isomorphic to $\{z \in {\mathbb C}: 1 \leq |z| \leq r\}$, for a suitable $r>1$;
    
    \item if $S$ is homeomorphic to the closed infinite strip, then (with the given dianalytic structure) it is isomorphic to $\{z \in {\mathbb C}: -1 \leq \Im(z) \leq 1\}$.
\end{enumerate}

Let $\widehat{S}$ and $\widehat{X}$ denote the Schottky doubles of $S$ and $X$, respectively; that is, $S = \widehat{S} / \langle \tau \rangle$ and $X = \widehat{X} / \langle \mu \rangle$, where $\tau : \widehat{S} \to \widehat{S}$ and $\mu : \widehat{X} \to \widehat{X}$ are reflections.

The branched covering $p$ naturally extends to a branched covering $\widehat{p} : \widehat{S} \to \widehat{X}$. More precisely, we can view $S$ and $X$ as included in $\widehat{S}$ and $\widehat{X}$, respectively, and define
\[
    \widehat{p}(s) := \begin{cases} 
        p(s) & s \in S, \\
        \mu(p(\tau(s))) & s \notin S,
    \end{cases}
\]
whose branch locus is given by $\widehat{B} := B \sqcup \mu(B)$, where $B$ is the branch locus of $p$.

Since $X$ is compact and it has exactly two boundary components, it follows that $\widehat{X}$ is a closed surface of genus at least one if $X$ is orientable, or at least four otherwise. In both cases, the Euler characteristic satisfies $\chi(\widehat{X}) \leqslant 0$.

Observe that $\widehat{S}$ is isomorphic to a torus or to ${\mathbb C}\setminus\{0\}$, according to whether $S$ is the closed annulus or the closed infinite strip, respectively. In any of these two cases, we may consider a universal holomorphic covering map $\pi:{\mathbb C} \to \widehat{S}$. So, if $\widehat{X}$ is orientable, then $f=\widehat{p} \circ \pi:{\mathbb C} \to \widehat{X}$ is a branched holomorphic covering map whose branch locus is again $\widehat{B}$. It follows from \cite{Brody} that $\widehat{X}$ cannot be hyperbolic. If $\widehat{X}$ is non-orientable, then $\widehat{p}$ factors though the orientable double cover $f:\widehat{X}^+ \to \widehat{X}$. By the same argument as before, we obtain that $\widehat{X}^{+}$ cannot be hyperbolic, so neither can be $\widehat{X}$. In particular, we have that $\chi(\widehat{X})= 0$, so $\widehat{X}$ is either a torus or a Klein bottle. Now, as $\widehat{X}$ must have genus at least four if it is non-orientable, we have that $\widehat{X}$ cannot be the Klein bottle. As a consequence, $\widehat{X}$ is a torus and, in particular, $X$ is a closed annulus.

Next, we proceed to prove that $\widehat{p}$, so $p$, is unbranched. 

(a) Let us first consider the case when $\widehat{S}$ is the torus. In this case, as both $\widehat{X}$ and $\widehat{S}$ are both of genus one, the branched covering $\widehat{p}:\widehat{S} \to \widehat{X}$ has finite degree. 
By the Riemann–Hurwitz formula applied to $\widehat{p}$ we conclude that the branch locus $\widehat{B}=\emptyset$, and $p$ is unbranched as required.

(b) Now, let us assume $\widehat{S}$ is isomorphic to ${\mathbb C}\setminus\{0\}$. Take a simple closed curve $\gamma \subset \widehat{X}$ bounding an open disc $D$ such that all possible branch points of $\widehat{p}$ are contained in its interior. Set $A=\widehat{X} \setminus D$; which is a compact genus one surface with exactly one boundary component $\gamma$. Let $Z \subset {\mathbb C}\setminus\{0\}$ be a connected component of $\widehat{p}^{-1}(A)$, and consider the restriction $\widehat{p}|_Z:Z \to A$, which is an unbranched covering map. 

As $Z$ is planar, there should be an essential simple closed curve $\alpha$ on $A$ such that a connected component of $\widehat{p}^{-1}(\alpha)$ is a simple path that connects the two punctures $0$ and $\infty$. Let $\beta$ be another essential simple closed curve on $A$ such that intersects $\alpha$ at exactly one point. Each connected component of $\widehat{p}^{-1}(\beta)$ is either a simple closed  curve (separating $0$ and $\infty$) or it is a simple path connecting $0$ with $\infty$. The second situation cannot hold as $\alpha$ and $\beta$ are assumed to intersect at precisely one point. Since $\gamma$ is homotopic to the commutator of $\alpha$ and $\beta$, it follows that each connected component of $\widehat{p}^{-1}(\gamma)$ will be homotopic to a simple closed curve $\delta$ bounding a disc, and that $\widehat{p}|_{\delta}:\delta \to \gamma$ is a homeomorphism. Now, for each such $\delta$ let $W$ be a connected component of $\widehat{p}^{-1}(D \cup \gamma)$ containing $\delta$ in its boundary. The only possibility is that $W$ is a closed disc with $\partial W=\delta$, and $\widehat{p}|_W:W \to D \cup \gamma$ is a branched covering whose restriction to $\delta$ is a homeomorphism onto $\gamma$. This asserts that such a covering must be an homeomorphism and, in particular,  it has no branch values. Hence the branch locus $\widehat{B}=\emptyset$, and $p$ is unbranched as required.
\end{proof}

\begin{remark}
   In the case where the covering has finitely many sheets and the surface $S$ is a closed annulus, the conclusion of Lemma \ref{Lemma:BranchedCoverOfTheAnnulusStrip} can also be obtained by applying the Riemann–Hurwitz formula for branched covers between surfaces with boundary. A straightforward computation shows that the only possibility is that the surface $X$ is homeomorphic to a closed annulus and that the covering map is unbranched.
\end{remark}

%%%%%%%%%%%%%%%%%%%%%%%%%%%
%%%%%%%%%%%%%%%%%%%%%%%%%%%
\section{Proof of Theorem \ref{BirmanHildenProperty-Generalization}}\label{Sec:MainProof}
To prove that the branched covering map $p:S\rightarrow X$ with branch locus $B$ satisfies the Birman--Hilden property
we show that item (3) of Proposition \ref{Prop:Equivalence:BirmanHilden} holds: the surjective homomorphism $\Theta:\LMap(X;B) \to \SMap(S)/D$  given by $[f]\mapsto [\tilde{f}]\mod \ D$ with $\tilde{f}$  a lift of $f$, is one-to-one. Our argument is based on the strategy used to prove \cite[Theorem 1]{Winarski2015}.

Notice that $X \setminus B$ is either of infinite type or has negative Euler characteristic because the map $p: S \setminus p^{-1}(B) \to X \setminus B$ is an unramified covering and $S$ satisfies these properties. If $X \setminus B$ is a sporadic surface, whose only possibility under our hypothesis is the sphere with three punctures, then $\Map(X; B)$ is finite. In this case, Corollary \ref{Corollary:ThetaFreeTorsionFiniteTypeSurfaces} implies that the map $\Theta$ is injective, and therefore $p$ has the Birman–Hilden property. Hence, for the rest of the proof we assume that $X \setminus B$ is a non-sporadic surface.

Take a $k$-stable Alexander system of curves $\Gamma$ for $X\setminus B$. The existence of $\Gamma$ is provided by Theorem \ref{thm:AlexanderMethodInfiniteSurfaces}; furthermore $\Gamma$ can be assumed to be a $1$-stable Alexander system if $X\setminus B$ is a surface of infinite type.  

Let $f\in \LHomeo(X;B)$ be a representative of a mapping class in $\mathrm{Ker}(\Theta)$. Notice that we get the desired result if we show that:

\medskip

\noindent {\bf Claim}: \emph{For every $\gamma \in \Gamma$, the simple closed curve $f(\gamma)$ is isotopic to $\gamma$ in $X\setminus B$.}

\medskip

Indeed, in this situation, if $X\setminus B$ is of infinite type then $f$ is isotopic to the identity $id_{X\setminus B}$, and hence $f\simeq id_X\ rel\ B$. Otherwise, $X\setminus B$ is of finite type and $f^k$ is isotopic to $id_{X\setminus B}$, therefore $f$ represents a mapping class of finite order in $\LMap(X;B)$. From Corollary \ref{Corollary:ThetaFreeTorsionFiniteTypeSurfaces} it follows that $f$ must represent the trivial mapping class in $\LMap(X;B)$.

\medskip

\noindent \emph{Proof of the Claim.} Fix $\gamma \in \Gamma$ and set $\beta:=f(\gamma)$. Let us take $\widetilde{f}$ a lift of $f$; up to precomposition by a deck transformation, we can assume that $\widetilde{f}$ is isotopic to $Id_S$. In particular, we have that $\widetilde{\gamma}:=p^{-1}(\gamma)$ is isotopic to $\widetilde{f}(\widetilde{\gamma})=p^{-1}(\beta)=:\widetilde{\beta}$ in $S$. \smallskip

We divide the proof in two cases:  $\gamma\cap \beta  =\emptyset$ and $\gamma\cap \beta \neq \emptyset$.

\vspace{0.1in}

\noindent \underline{\emph{Case (1): $\gamma\cap \beta =\emptyset$}}. In this case,  $\widetilde{\gamma}$ and $\widetilde{\beta}$ are disjoint. Consider the surfaces $X^\prime:=X\setminus (\gamma \sqcup \beta)$ and $S^\prime:= S\setminus (\widetilde{\gamma}\sqcup \widetilde{\beta})$ and take $p^\prime:S^\prime\to X^\prime$ the branched covering map induced by $p$. Let $\Omega$ be the set whose elements are the closure in $S$ of the connected components of $S^\prime$. Observe that, if $W\in\Omega$, then each boundary component of $W$ is a connected component of $\widetilde{\gamma}\sqcup \widetilde{\beta}$, and the restriction $p_{\vert W}:W\to p(W)$ is a branched covering map. 

As $f$ is isotopic to the identity $Id_S$, $\widetilde{\gamma}$ is isotopic to $\widetilde{\beta}$, so we may find $W\in\Omega$ such that either $W$ is homeomorphic to a closed annulus or $W$ is homeomorphic to an closed infinite strip (if $p$ has finite degree, then $W$ is necessarily an annulus). Such $W$ has one boundary component given by a connected component of $\widetilde{\gamma}$ and one boundary component which is a connected component of $\widetilde{\beta}$. Therefore, $p(W)$ is a surface whose boundary is equal to $\gamma \sqcup \beta$. If $W$ is a closed annulus, it is clear that $p(W)$ is compact. The same is true if $W$ is the closed infinite strip, since $p$ restricted to either boundary component of $W$ is equivalent to the universal cover $\mathbb{R}\rightarrow \mathbb{S}^1$ and we can take a compact subspace $F$ of $W$ such that $p(F)=p(W)$. By Lemma \ref{Lemma:BranchedCoverOfTheAnnulusStrip}, in both cases we have that $p(W)$ is the closed annulus and $p_{\vert W}:W\to p(W)$ is unbranched. Furthermore, since $p$ is a fully-ramified branched covering map, $p(W)$ does not contain in its interior any branch point of $p$, that is $B\cap p(W)=\emptyset$. This allows us to conclude that $\gamma$ and $\beta$ are isotopic in $X\setminus B$. 

\vspace{0.1in}

\noindent \underline{\emph{Case (2): $\gamma\cap \beta \neq \emptyset$}}. We have that $\widetilde{\gamma}\cap \widetilde{\beta}\neq \emptyset$ and they are isotopic in $S$, so they form bigons in $S$. Let $\w{C}$ be an innermost bigon in $S$ bounded by $\widetilde{\gamma}$ and $\widetilde{\beta}$. By following the proof of \cite[Lemma 1]{Winarski2015}, we obtain that $C = p(\widetilde{C})$ is an innermost bigon bounded by $\gamma$ and $\beta$ in $X$ and moreover, that $C$ does not contain any branch point in its interior.

Hence, we can define an isotopy on $X \setminus B$ to eliminate the bigon $C$. This process reduces the intersection number between $\gamma$ and $\beta$. Since $\gamma$ and $\beta$ are compact, we can continue this process a finite number of times until $\gamma$ and $\beta$ are disjoint. We then apply \emph{Case (1)} to these curves, completing the proof.
\qed

%%%%%%%%%%%%%%%%%%%%%%%
\subsection{An alternative proof for unbranched regular covers}\label{Rmk:original} The original argument of Birman and Hilden in \cite[Lemma 6]{BH73} can be combined with the Alexander method to give an alternative proof of the Birman--Hilden property for unbranched regular covers over a non-compact hyperbolic base surface $X$ as we now explain.

Let $p:S \to X$ be an unbranched regular cover and suppose $X$ is noncompact and different from the sphere or the projective plane with at most two punctures. Let $p_\ast$ be the induced homomorphism on fundamental groups, $x\in X$ a base point and $\widetilde{x}\in p^{-1}(x)$. Since the fundamental group of $X$ if free of rank at least two, then the centralizer of $p_*(\pi_1(S,\widetilde{x}))$ in $\pi_1(X,x)$ is trivial.  

Now, take $\widetilde{f} \in \SHomeo(S) \cap \Homeo_0(S)$ and let $f \in \Homeo(X)$ be its projection under $p$. Following the same argument in the proof of \cite[Lemma 1.6]{BH73}, the fact that the centralizer of $p_*(\pi_1(S,\widetilde{x}))$ in $\pi_1(X,x)$ is trivial, implies (after composing with a change of base point) that $f$ induces the identity on the fundamental group. By  Alexander's method if the surface $X$ is of infinite type, and by the Dehn-Nielsen-Baer Theorem if $X$ is of finite type, we have that $f$ is isotopic to the identity. Lifting this isotopy to $S$ we obtain a fiber-preserving isotopy between $\widetilde{f}$ and a deck transformation of $p$. By Proposition \ref{Prop:DectTransfIsotopicId}, this deck transformation must be the identity $Id_S$, and therefore we conclude that $p$ satisfies the Birman--Hilden property.

\begin{remark}
A similar argument is given in \cite[Section 1.9]{LUCAS25BH} for an unbranched regular cover $p:S\to X$  with finitely many sheets between closed oriented $3$--manifolds $S$ and $X$, under the hypotheses that $X$ is aspherical, that roots are unique in $\pi_1(X)$, and that homotopic homeomorphisms of $X$ are always isotopic.
\end{remark}

Notice that if the surface $X$ is closed with negative Euler characteristic, the original argument of Birmann and Hilden cannot be adapted to include unbranched covers with infinitely many sheets. This is because they use the hypothesis that $p$ has finitely many sheets in an essential way in order to obtain that the centralizer of $p_*(\pi_1(S,\widetilde{x}))$ in $\pi_1(X,x)$ is trivial, see the proof of \cite[Theorem 1]{BH73}. Something similar happens with the argument to prove \cite[Proposition 3]{ALS09}.

%%%%%%%%%%%%%%%%%%%%%%%%
%%%%%%%%%%%%%%%%%%%%%%%%
\section{Some applications to geometrically characteristic covers}\label{Sec:Applications}
In this section, we apply the Birman–Hilden property to unbranched covers that are \emph{geometrically characteristic}  to obtain injective homomorphisms between mapping class groups and surface braid groups.

\begin{definition}\label{def:geomchar}[Geometrically characteristic cover]
Let $\Sigma$ be a surface and $x\in\Sigma$ a based point. An automorphism of $\pi_1(\Sigma,x)$ is called \emph{geometric} if it is induced by a homeomorphism of $\Sigma$ that preserves $x$. A subgroup $N\leqslant \pi_1(\Sigma,x)$ is \emph{geometrically characteristic} if $N$ is invariant under all geometric automorphisms of $\pi_1 (\Sigma,x)$. An unbranched covering map $p:S\to X$ is \emph{geometrically characteristic} if it corresponds to a geometrically characteristic subgroup of $\pi_1(X,x)$. 
\end{definition}

Examples of such type of covers include unbranched characteristic covers like homology covers, the $\mathbb{Z}/n\mathbb{Z}$-homology covers, and the orientable double cover of a non-orientable surface. The first two types have been recently studied in \cite[Theorems 1.2 $\&$ 1.3]{BCC24}; see also \cite[Section 1.1.]{BasHidalgo}, and \cite[Corollary 6.2]{AGH22}.  Furthermore, in \cite[Theorem 1.5]{BCC24} the authors classify the homeomorphism types of geometric characteristic covers, with infinitely many sheets, over a finite type orientable surface.

Notice that by definition, if $p:S\to X$ is a geometric characteristic covering map, then $\Homeo(X;x)=\LHomeo(X;x)$. Moreover, the liftable mapping class group is precisely the whole mapping class group; compare with the definition of geometric characteristic cover in \cite[Section 3.1]{ALMc2024}.  

\begin{proposition}\label{Prop:Geometrically:LMod=Mod}
    Let $p:S\to X$ be an unbranched geometrically characteristic covering map, and let $P\subset X$ be a discrete set of marked points. Then $\LMap(X;P) = \Map(X;P) $.
\end{proposition}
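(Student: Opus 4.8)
The plan is to reduce the statement to the case of empty marked-point set by a direct definitional argument, since the hypothesis that $p$ is geometrically characteristic was phrased precisely so that $\mathrm{Homeo}(X;x)=\mathrm{LHomeo}(X;x)$. First I would recall the inclusion $\mathrm{LMap}(X;P)\subseteq \mathrm{Map}(X;P)$, which holds by definition, so only the reverse inclusion needs proof. The key point is to show that \emph{every} homeomorphism $f\in\mathrm{Homeo}(X;P)$ is liftable with respect to $p$, i.e.\ that $f_\ast\bigl(p_\ast(\pi_1(S,\widetilde{x}))\bigr)=p_\ast(\pi_1(S,\widetilde{x}))$ after a suitable change of base point, so that the lifting criterion for covering spaces applies. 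Here one must be a little careful: a homeomorphism $f$ preserving $P$ need not fix the base point $x$, so $f$ induces an isomorphism $\pi_1(X,x)\to\pi_1(X,f(x))$, and one chooses a path from $f(x)$ to $x$ to get a (geometric, up to inner automorphism) automorphism of $\pi_1(X,x)$; the normal subgroup $N=p_\ast(\pi_1(S,\widetilde x))$ being geometrically characteristic — hence in particular invariant under inner automorphisms, being normal — is preserved. Thus $f$ lifts, so $f\in\mathrm{LHomeo}(X;P)$.

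The second step is to pass from homeomorphisms to mapping classes: applying $\rho_{X;P}$ gives $\mathrm{Map}(X;P)=\rho_{X;P}(\mathrm{Homeo}(X;P))=\rho_{X;P}(\mathrm{LHomeo}(X;P))=\mathrm{LMap}(X;P)$, using that by Definition~\ref{def:symlift} the liftable mapping class group is exactly $\rho_{X;P}(\mathrm{LHomeo}(X;P))$. One subtlety to address is whether $P$ needs to be related to the branch locus: since $p$ is unbranched, $B=\emptyset$, and the marked set $P$ is arbitrary discrete; the liftability argument is insensitive to $P$ because it only concerns the covering $p\colon S\to X$ and the fundamental-group condition, while the set-preservation of $P$ is automatic for both $f$ and its lift (the lift of a homeomorphism preserving $P$ preserves $p^{-1}(P)$, but the statement only asks for $\mathrm{LMap}(X;P)$, so nothing about $p^{-1}(P)$ is required here). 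A further point: one should note that when $\Sigma=X$ is orientable we restrict to orientation-preserving homeomorphisms throughout, and the lift of an orientation-preserving homeomorphism is orientation-preserving, so the argument stays within the relevant groups.

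The main obstacle — really the only place requiring care — is the base-point bookkeeping in verifying liftability: one needs to check that ``geometrically characteristic'' as defined (invariance under geometric automorphisms of $\pi_1(X,x)$, where a geometric automorphism comes from a \emph{based} homeomorphism) genuinely implies invariance under the automorphism induced by an \emph{arbitrary} homeomorphism of $X$ preserving $P$ after a change of base point. This follows because any homeomorphism $f$ of $X$ is isotopic to one fixing $x$ when $X$ is connected (by an ambient isotopy dragging $f(x)$ back to $x$; this requires $X$ connected, which is assumed), and such isotopies change the induced outer automorphism only by composition with the path-transport isomorphism, i.e.\ by an inner automorphism once we identify the two fundamental groups — and $N$, being normal, is preserved by inner automorphisms. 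Hence $f_\ast$ preserves $N$ up to inner automorphism, which is exactly the condition needed for the classical lifting criterion. Once this is in place the rest is a one-line application of definitions, so I would keep the write-up short: state the inclusion $\mathrm{LMap}\subseteq\mathrm{Map}$, prove $\mathrm{Homeo}(X;P)\subseteq\mathrm{LHomeo}(X;P)$ via the lifting criterion and the base-point remark, and conclude by applying $\rho_{X;P}$.
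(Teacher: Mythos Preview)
Your proposal is correct and follows essentially the same approach as the paper: choose a base point $x\in X\setminus P$, represent a given mapping class by a homeomorphism fixing $x$ (via an ambient isotopy dragging $f(x)$ back to $x$, rel $P$), and then invoke the definition of geometrically characteristic to conclude this based representative lifts. The paper's write-up is terser---it skips the discussion of inner automorphisms and the lifting criterion and simply asserts that a based representative lifts---but the underlying argument is the one you describe.
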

\begin{proof} Let $x\in X\setminus P$ be a base point of $X$. Any mapping class in $\Map(X;P)$
     can be represented by a homeomorphism $f$ in $\Homeo(X;P)$ such that $f(x)=x$. Since $p:S\to X$ is an unbranched geometrically characteristic cover, it follows that $f$ lifts to an element $\widetilde{f}\in \Homeo(S)$. Therefore, $\Map(X;P)=\LMap(X;P)$. 
\end{proof}

%%%%%%%%%%%%%%%%%%%%%%
\subsection{Injections between mapping class groups}
Let $S$ be a surface of infinite type or of finite type with negative Euler characteristic. From Corollary \ref{coro1}, we know that any unbranched covering map $p:S\to X$ satisfies the Birman-Hilden property. If $P\subset X$ is a closed and discrete set of marked points and $\widetilde{P}=p^{-1}(P)$, then $p:S\setminus \widetilde{P} \to X\setminus P$ also satisfies the Birman-Hilden property. Thus, there is a short exact sequence (see equation \eqref{Eq:ExactSequence:MCG's})
\begin{equation}\label{Eq:ShortExact:Injection}
    1\rightarrow \Deck(p) \rightarrow \SMap(S;\widetilde{P})  \xrightarrow{} \LMap(X;P) \rightarrow 1.
\end{equation}
The following result is a consequence of Proposition \ref{Prop:Geometrically:LMod=Mod} and the previous observation. It extends the result of Aramayona--Leininger--Souto \cite[Proposition 5]{ALS09} to the context of non-orientable surfaces and infinite type surfaces.

\begin{proposition}\label{Res:Injective:Homomorphism}
    Let $p \colon S \to X$ be an unbranched geometrically characteristic covering map, let $P \subset X$ be a closed and discrete set of marked points, and suppose that $S$ is a surface of infinite type or of finite type with negative Euler characteristic. If the short exact sequence in equation \eqref{Eq:ShortExact:Injection} splits, then there is an injective homomorphism
    \[
    \Map(X;P) \to \Map(S;\widetilde{P})
    \]
    obtained by lifting mapping classes.
\end{proposition}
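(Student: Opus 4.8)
The plan is to leverage the short exact sequence \eqref{Eq:ShortExact:Injection} together with Proposition \ref{Prop:Geometrically:LMod=Mod}. Since $p$ is geometrically characteristic, Proposition \ref{Prop:Geometrically:LMod=Mod} applied to the marked set $\widetilde{P}=p^{-1}(P)$ on $X$ (here the relevant marked set on the base is $P$) gives $\LMap(X;P)=\Map(X;P)$, so the sequence \eqref{Eq:ShortExact:Injection} reads
\[
    1\rightarrow \Deck(p) \rightarrow \SMap(S;\widetilde{P}) \xrightarrow{\Phi} \Map(X;P) \rightarrow 1.
\]
By hypothesis this sequence splits, so there is a group homomorphism $s\colon \Map(X;P)\to \SMap(S;\widetilde{P})$ with $\Phi\circ s=\mathrm{id}_{\Map(X;P)}$. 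Composing $s$ with the inclusion $\SMap(S;\widetilde{P})\hookrightarrow \Map(S;\widetilde{P})$ yields the desired homomorphism $\Map(X;P)\to \Map(S;\widetilde{P})$, which is injective because it has a left inverse, namely $\Phi$ followed by the identification $\SMap(S;\widetilde{P})\le \Map(S;\widetilde{P})$ — more precisely, $\Phi\circ s=\mathrm{id}$ already exhibits injectivity of $s$, hence of the composite with the inclusion.

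Next I would check that this splitting map is indeed ``obtained by lifting mapping classes'', as claimed in the statement. Given $[f]\in\Map(X;P)=\LMap(X;P)$, choose the representative $f\in\Homeo(X;P)$ fixing a base point $x\in X\setminus P$; since $p$ is geometrically characteristic, $f$ lifts to some $\widetilde{f}\in\Homeo(S;\widetilde{P})$, and the different lifts differ by deck transformations. The section $s$ selects, in a way compatible with the group operation, one mapping class $[\widetilde{f}]\in\SMap(S;\widetilde{P})$ among the $|\Deck(p)|$ possible ones (modulo isotopy). I would remark that the map $\Map(X;P)\to\Map(S;\widetilde{P})$ is well-defined precisely because $\Phi$ is a well-defined surjection with kernel $\Deck(p)$ — which is exactly the content of the Birman--Hilden property established in Theorem \ref{BirmanHildenProperty-Generalization} via Corollary \ref{coro1}, together with Proposition \ref{Prop:DectTransfIsotopicId} giving $D\cong\Deck(p)$.

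The main (and essentially only) obstacle is purely bookkeeping: one must verify that $p\colon S\setminus\widetilde{P}\to X\setminus P$ falls under the hypotheses of Corollary \ref{coro1}, so that the short exact sequence \eqref{Eq:ShortExact:Injection} exists in the first place. This requires that $S\setminus\widetilde{P}$ is of infinite type or of finite type with negative Euler characteristic — which follows since removing a closed discrete set from $S$ cannot increase the Euler characteristic, and $S$ already satisfies the hypothesis — and that $p$ restricted to the complement of the marked points is still an unbranched cover, which is immediate. There is no genuine difficulty beyond this; the splitting is assumed, not proved, and the injectivity is the formal fact that a retraction forces its section to be a monomorphism. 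I would therefore keep the proof short, citing the relevant statements and spelling out only the identification $\LMap(X;P)=\Map(X;P)$ and the retraction argument.
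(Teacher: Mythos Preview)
Your proposal is correct and follows essentially the same approach as the paper: the paper does not give a detailed proof but simply states that the proposition is a consequence of Proposition \ref{Prop:Geometrically:LMod=Mod} (giving $\LMap(X;P)=\Map(X;P)$) together with the existence of the short exact sequence \eqref{Eq:ShortExact:Injection}, and you have correctly unpacked this into the retraction argument. Your additional remarks verifying the hypotheses for $p\colon S\setminus\widetilde{P}\to X\setminus P$ and explaining why the section is ``obtained by lifting mapping classes'' are appropriate elaborations of what the paper leaves implicit.
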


We now apply the previous result to obtain in Corollary \ref{Cor:Injection:Characteristic}, injections of mapping class groups with a single marked point.

\begin{proof}[Proof of Corollary \ref{Cor:Injection:Characteristic}]
    Since the cover is geometrically characteristic, we obtain a homomorphism
    \[
        \Homeo(X; \{x\}) \to \Homeo(S; p^{-1}(x)), \qquad f \mapsto \widetilde{f}; 
    \]
    where $\widetilde{f}$ is the unique lift satisfying $\widetilde{f}(\widetilde{x}) = \widetilde{x}$. This defines a homomorphism $\Map(X; \{x\}) \to \SMap(S; p^{-1}(x))$ that splits the short exact sequence \eqref{Eq:ShortExact:Injection}. Thus, the result follows from Proposition \ref{Res:Injective:Homomorphism}.
\end{proof}

%%%%%%%%%%%%%%%%%%%%
\subsection{Mapping class group of non-orientable surfaces}\label{sec:orientable double cover}
Consider the orientable double cover $p \colon S \to N$ of a connected non-orientable hyperbolic surface $N$. Let us recall that the orientable double cover of a non-orientable surface with empty boundary coincides with its complex double cover, see Subsection \ref{SubSec:KleinSurfaces}. It is well known that $p$ is unbranched and characteristic, and moreover any homeomorphism $f\colon N\to N$ admits a unique lift $\widetilde{f} \colon S \to S$ that preserves the orientation (in fact, it admits exactly two liftings, one that preserves the orientation and another that reverses it). This yields a well-defined homomorphism
\begin{equation}\label{Eq:Injection:Orientable:Double}
    \phi \colon \Map(N) \to \SMap(S), \qquad [f] \mapsto [\widetilde{f}]; 
\end{equation}	
which splits the short exact sequence \eqref{Eq:ShortExact:Injection}. By Proposition \ref{Res:Injective:Homomorphism}, we obtain that the homomorphism $\phi$ is injective. This generalizes results of Birman and Chillingworth \cite{BC72} and Lima-Gon{\c{c}}alves, Guaschi, and Maldonado \cite[Theorem 1.1]{GGMaldonado2018NonOrientable} to the context of infinite type surfaces.

\begin{corollary}[Mapping class groups of a non-orientable surface and its orientable double cover]\label{Corollary:DoubleCover}
Let $p:S\to N$ be the orientable double cover of a non-orientable surface $N$ with empty boundary, and suppose that $S$ is of infinite type or of finite type with negative Euler characteristic. Then the induced map 
\[
\phi: \Map(N)\to \Map(S), \ \ \ \ \ \ [f]\mapsto [\widetilde{f}];
\]
is injective, where $\widetilde{f}$ is the unique lift of $f$ that preserves the orientation of $S$.    
\end{corollary}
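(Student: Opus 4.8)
The statement is essentially a special case of Proposition \ref{Res:Injective:Homomorphism}, so the plan is to verify its hypotheses for the orientable double cover $p\colon S\to N$ and then simply invoke it. First I would recall that the orientable double cover of a non-orientable surface with empty boundary is an unbranched covering map, and that it is \emph{geometrically characteristic}: the corresponding subgroup of $\pi_1(N)$ is the kernel of the orientation homomorphism $\pi_1(N)\to\mathbb{Z}/2\mathbb{Z}$, which sends a loop to $0$ or $1$ according to whether it is orientation-preserving or orientation-reversing. Since any homeomorphism of $N$ fixing the basepoint permutes orientation-preserving and orientation-reversing loops according to the same rule (it necessarily respects local orientability), every geometric automorphism of $\pi_1(N,x)$ preserves this subgroup; hence $p$ is geometrically characteristic in the sense of Definition \ref{def:geomchar}.

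\textbf{Splitting the exact sequence.} Next I would produce the splitting of the short exact sequence \eqref{Eq:ShortExact:Injection} with $P=\emptyset$. The key classical fact is that every homeomorphism $f\colon N\to N$ admits exactly two lifts to $S$, one orientation-preserving and one orientation-reversing; this is because $\mathrm{Deck}(p)\cong\mathbb{Z}/2\mathbb{Z}$ is generated by the (fixed-point-free, orientation-reversing) deck involution, and composing a lift with this involution interchanges the two. Sending $[f]$ to the class of its unique orientation-preserving lift $\widetilde{f}$ is therefore a well-defined map; it is a homomorphism because the composition of two orientation-preserving homeomorphisms is orientation-preserving, so $\widetilde{f}\circ\widetilde{g}$ is the orientation-preserving lift of $f\circ g$. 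This gives a section $\phi\colon\Map(N)\to\SMap(S)$ of \eqref{Eq:ShortExact:Injection}, exactly as in \eqref{Eq:Injection:Orientable:Double}. Since $S$ is of infinite type or of finite type with negative Euler characteristic by hypothesis, Proposition \ref{Res:Injective:Homomorphism} applies and yields that $\phi$ is injective.

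\textbf{Main obstacle.} The only genuine content beyond bookkeeping is ensuring that the map $[f]\mapsto[\widetilde f]$ is well defined on mapping classes, i.e.\ that isotopic homeomorphisms of $N$ have isotopic orientation-preserving lifts — but this is precisely what the Birman--Hilden property for $p$ (Corollary \ref{coro1}, valid here since $p$ is an unbranched cover and $S$ satisfies the standing hypotheses) guarantees, via the exact sequence \eqref{Eq:ShortExact:Injection} and its interpretation through item \eqref{item:BH3} of Proposition \ref{Prop:Equivalence:BirmanHilden}. Thus the proof is short: identify $p$ as an unbranched geometrically characteristic cover, exhibit the orientation-preserving-lift section, and apply Proposition \ref{Res:Injective:Homomorphism}. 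I expect no serious difficulty; the one point requiring a line of care is distinguishing the two lifts by orientation behaviour and checking this is compatible with composition, which I would state explicitly rather than leave implicit.
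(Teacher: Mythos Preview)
Your proposal is correct and follows essentially the same route as the paper: identify $p$ as an unbranched (geometrically) characteristic cover, exhibit the section of \eqref{Eq:ShortExact:Injection} via the unique orientation-preserving lift, and invoke Proposition \ref{Res:Injective:Homomorphism}. One small point of framing: the well-definedness of $[f]\mapsto[\widetilde f]$ on mapping classes does not itself require the Birman--Hilden property, only the isotopy lifting property of covers (an isotopy of $f$ lifts to an isotopy of $\widetilde f$, and orientation-preservation is stable along a continuous path of lifts); Birman--Hilden enters through Proposition \ref{Res:Injective:Homomorphism} to give \emph{injectivity}, not well-definedness.
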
 

\begin{remark}
In \cite[Remark 2.4.]{GGMaldonado2018NonOrientable} it is shown that the induced homomorphism $\Map(N_2)\to \Map(S_1)$ by the orientable double cover $S_1\to N_2$ of the Klein bottle is not injective. The remaining case not covered by Corollary \ref{Corollary:DoubleCover} is the orientable double cover $S_0^2\to N_1^1$ of the projective plane with one marked point. In this situation, the natural induced homomorphism $\Map(S_0^2)\to \Map(N_1^1)$ is an isomorphism. 
\end{remark}

%%%%%%%%%%%%%%%%%%%%
\subsection{Injections between surface braid groups} 
For $k\geq 1$, let $\operatorname{UConf}_k(\Sigma)$ denote the \textit{unordered configuration space} of $k$ particles on the surface $\Sigma$, and take $P\in \operatorname{UConf}_k(\Sigma)$.

Recall that the \emph{surface braid group on $k$ strands} is defined as  
\[
    B_k(\Sigma) = \pi_1(\operatorname{UConf}_k(\Sigma),P).
\]
 
The evaluation map on the unordered configuration $P$ is a fiber bundle 

\begin{eqnarray*}
    \mathcal{E}_P:\Homeo(\Sigma) & \to & \operatorname{UConf}_k(\Sigma)\\
    f & \mapsto & f(P),
\end{eqnarray*}

whose fiber over $P$ is $\Homeo(\Sigma; P)$, see \cite[Lemma 1.35]{KT08}. 

Consider the orientable double cover $p: S \to N$ of a non-orientable surface $N$. Fix $P \in \operatorname{UConf}_k(N)$ and let  $\widetilde{P}=p^{-1}(P)\in\operatorname{UConf}_{2k}(S)$.  The corresponding evaluation maps $\mathcal{E}_P$ and $\mathcal{E}_{\widetilde{P}}$ give the following commutative diagram:
\[
    \xymatrix{
        \Homeo(N;P) \ar[r] \ar[d]_{\widetilde{\phi}_k} &  \Homeo(N) \ar[r]^{\mathcal{E}_P} \ar[d]_-{\widetilde{\phi}} & \operatorname{UConf}_k(N) \ar[d]^-{\widetilde{\rho}} \\
        \Homeo(S;\widetilde{P}) \ar[r] & \Homeo(S) \ar[r]^{\mathcal{E}_{\widetilde{P}}}  & \operatorname{UConf}_{2k}(S). \\
    }
\]
Here the map $\widetilde{\rho}$ sends each configuration $\{ x_1,\ldots,x_k \} \in \operatorname{UConf}_k(N)$ to its preimage under $p$, that is, 
\[
    \widetilde{\rho}(\{ x_1,\ldots,x_k \} ) = p^{-1}(\{ x_1,\ldots,x_k\}) = \{ \widetilde{x}_1,\widetilde{x}_2, \ldots,\widetilde{x}_{2k-1}, \widetilde{x}_{2k} \}.
\]
The maps $\widetilde{\phi}$ and $\widetilde{\phi}_k$ assign to each homeomorphism $f$ of $N$ its unique orientation-preserving lift $\widetilde{f}$ on $S$. If $N$ is of infinite type or of finite type type negative Euler characteristic, then, by \cite[Theorem 5.1 and 5.3]{Ham66} and \cite[Theorem 1.1 (ii)]{Yag00H}, both $\Homeo_0(N)$ and $\Homeo_0(S)$ are contractible. Hence, the fiber bundles above induce the following commutative diagram between \textit{Birman exact sequences}:
\[
    \xymatrix{
        1 \ar[r]  & B_k(N) \ar[r] \ar[d]_-{\rho} & \Map(N; P) \ar[r] \ar[d]^-{\phi_k}  & \Map(N) \ar[r] \ar[d]^-{\phi} & 1  \\
        1 \ar[r]  & B_{2k}(S) \ar[r] & \Map(S; \widetilde{P}) \ar[r]   & \Map(S) \ar[r] & 1.
    }
\]
By Corollary \ref{Corollary:DoubleCover}, the homomorphisms $\phi$ and $\phi_k$ are injective. It follows that $\rho$ is also injective. We thus obtain the following result which generalizes \cite[Corollary 2]{GG12BraidsCovers} of Gon\c{c}alves and Guaschi. In particular, our argument above gives an alternative proof of \cite[Corollary 2]{GG12BraidsCovers} in the case when the surface $N$ is a non-orientable finite type surface, with empty boundary and negative Euler characteristic.

\begin{corollary}[Braid groups of a non-orientable surface and its orientable double cover] \label{Cor:Injectivity:Braids}
Let $p : S \to N$ be the orientable double cover of a non-orientable surface $N$, with empty boundary, of infinite type or of finite type with negative Euler characteristic. Then the induced homomorphism  
\[
    \rho: B_k(N) \to B_{2k}(S),
\]
on surface braid groups is injective. 
\end{corollary}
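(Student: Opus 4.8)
The plan is to run the diagram chase in the commutative square of Birman exact sequences displayed above, for which the only non-formal ingredient is the injectivity of the middle vertical map $\phi_k\colon\Map(N;P)\to\Map(S;\widetilde P)$. I would first record that, since $N$ is of infinite type or of finite type with negative Euler characteristic, so is $S$, and that the cited results of Hamstrom and Yagasaki guarantee that $\Homeo_0(N)$ and $\Homeo_0(S)$ are contractible. Feeding this into the long exact sequences of the evaluation fiber bundles $\mathcal{E}_P$ and $\mathcal{E}_{\widetilde P}$ produces the two Birman exact sequences of the displayed diagram; in particular the inclusions $\iota_N\colon B_k(N)\hookrightarrow\Map(N;P)$ and $\iota_S\colon B_{2k}(S)\hookrightarrow\Map(S;\widetilde P)$ are injective, and naturality of the long exact sequence with respect to the map of fibrations $(\widetilde\phi_k,\widetilde\phi,\widetilde\rho)$ makes the left-hand square commute.

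Next I would establish that $\phi_k$ is injective. The orientable double cover $p\colon S\to N$ is unbranched and geometrically characteristic, so for the closed discrete set $P\subset N$ of marked points the short exact sequence \eqref{Eq:ShortExact:Injection} is available; it splits, because a homeomorphism of $N$ preserving $P$ has a unique orientation-preserving lift to $S$, that lift preserves $\widetilde P=p^{-1}(P)$, and $[f]\mapsto[\widetilde f]$ is precisely the section $\Map(N;P)\to\SMap(S;\widetilde P)$ induced by $\widetilde\phi_k$. Hence Proposition \ref{Res:Injective:Homomorphism} applies and gives that $\phi_k$ is injective. Equivalently, one may apply Corollary \ref{Corollary:DoubleCover} directly to the orientable double cover $p\colon S\setminus\widetilde P\to N\setminus P$ of the punctured surface $N\setminus P$, after observing that deleting the closed discrete set $P$ neither raises the Euler characteristic nor destroys infinite type, and that $\Map(N;P)\cong\Map(N\setminus P)$ and $\Map(S;\widetilde P)\cong\Map(S\setminus\widetilde P)$ identify $\phi_k$ with the lifting homomorphism of that cover.

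Finally I would carry out the chase. Let $b\in B_k(N)$ with $\rho(b)=1$ in $B_{2k}(S)$. Commutativity of the left-hand square gives $\phi_k(\iota_N(b))=\iota_S(\rho(b))=\iota_S(1)=1$ in $\Map(S;\widetilde P)$; injectivity of $\phi_k$ then forces $\iota_N(b)=1$, and injectivity of $\iota_N$ forces $b=1$. Thus $\rho\colon B_k(N)\to B_{2k}(S)$ is injective, proving Corollary \ref{Cor:Injectivity:Braids}. I do not expect a genuine obstacle: all the weight of the argument is carried by Proposition \ref{Res:Injective:Homomorphism} (hence by Theorem \ref{BirmanHildenProperty-Generalization} and Corollary \ref{Corollary:DoubleCover}) and by the contractibility of $\Homeo_0$ in the infinite-type setting; the only points needing care are verifying that the $\phi_k$ in the fiber-bundle diagram coincides with the lifting homomorphism fed into Proposition \ref{Res:Injective:Homomorphism}, and that passing to the punctured surfaces stays within the stated hypotheses.
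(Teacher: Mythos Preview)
Your proposal is correct and follows essentially the same approach as the paper: set up the commutative square of Birman exact sequences using contractibility of $\Homeo_0$, invoke Corollary~\ref{Corollary:DoubleCover} (equivalently Proposition~\ref{Res:Injective:Homomorphism}) to get $\phi_k$ injective, and then chase. You are in fact slightly more explicit than the paper about why $\phi_k$ is injective---the paper simply cites Corollary~\ref{Corollary:DoubleCover} for both $\phi$ and $\phi_k$, whereas you spell out the two equivalent justifications (splitting via the unique orientation-preserving lift, or applying Corollary~\ref{Corollary:DoubleCover} to the punctured cover $S\setminus\widetilde P\to N\setminus P$).
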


In \cite[Theorem 1]{GG12BraidsCovers}, Gon\c{c}alves and Guaschi prove a more general result in the finite type setting: given a finite type surface $X$, possibly with non-empty boundary, and a covering $p : S \to X$ of degree $d$,  the induced homomorphism between surface braid groups $B_k(X) \to B_{dk}(S)$ is injective for every $k \in \mathbb{N}$. Their proof of this result is by induction on the number of strands $k$, using fibrations that forget a point in configuration spaces and the associated long exact sequences in homotopy. 

In contrast, our argument depends on the injectivity of the homomorphism between mapping class groups $\Map(N;P)\to \Map(S;\widetilde{P})$, but applies only to the case of the orientable double cover. However, our method extends to any unbranched geometrically characteristic cover $p:S\to X$ of degree $d$ via Corollary \ref{Cor:Injection:Characteristic}, which yields injective homomorphisms $\Map(X^*)\to \Map(S^*)$ and $\Map(X^*;P)\to \Map(S^*;\widetilde{P})$, where $X^* = X \setminus \{x\}$ and $S^* = S \setminus p^{-1}(x)$ for some point $x \in X$ and a finite set $P \subset X^*$. By an analogous argument, we obtain the following result.

\begin{corollary}[Injections between surface braid groups]\label{Cor:Braids2} Let $p \colon S \to X$ be an unbranched  geometric characteristic covering map of degree $d$ between connected topological surfaces  without boundary $S$ and $X$. Suppose that $S$ is of infinite type or of finite type with negative Euler characteristic. Then the induced homomorphism
    \[
        B_k(X^*) \to B_{dk}(S^*),
    \]
    on surface braid groups is injective, where $X^* = X \setminus \{x\}$ and $S^* = S \setminus p^{-1}(x)$ for some point $x \in X$.
\end{corollary}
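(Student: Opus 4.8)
The plan is to mirror the argument already carried out for Corollary \ref{Cor:Injectivity:Braids}, replacing the orientable double cover by an arbitrary unbranched geometrically characteristic cover $p\colon S\to X$ of degree $d$, and replacing the surfaces $N$ and $S$ by the punctured surfaces $X^*=X\setminus\{x\}$ and $S^*=S\setminus p^{-1}(x)$. First I would record that the restriction $p\colon S^*\to X^*$ is again an unbranched cover of degree $d$, and it is still geometrically characteristic: a homeomorphism of $X$ fixing $x$ restricts to one of $X^*$, and conversely every homeomorphism of $X^*$ extends over the puncture $x$, so the subgroup of $\pi_1(X^*)$ corresponding to $p$ is invariant under all geometric automorphisms because the one for $p\colon S\to X$ was. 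Hence Corollary \ref{coro1} applies to $p\colon S^*\to X^*$ and, combined with Proposition \ref{Prop:Geometrically:LMod=Mod}, the short exact sequence \eqref{Eq:ShortExact:Injection} holds with $\LMap$ replaced by the full $\Map$.

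Next I would set up the two Birman exact sequences. Fix a finite configuration $P\in\operatorname{UConf}_k(X^*)$ and let $\widetilde{P}=p^{-1}(P)\in\operatorname{UConf}_{dk}(S^*)$. Exactly as in the diagram preceding Corollary \ref{Cor:Injectivity:Braids}, the evaluation fiber bundles $\mathcal{E}_P\colon\Homeo(X^*)\to\operatorname{UConf}_k(X^*)$ and $\mathcal{E}_{\widetilde{P}}\colon\Homeo(S^*)\to\operatorname{UConf}_{dk}(S^*)$, together with the lift map $f\mapsto\widetilde f$ (well defined because the cover is geometrically characteristic, after choosing a basepoint $\widetilde x\in p^{-1}(x)$ to pin down the lift) and the preimage map $\widetilde\rho$ on configuration spaces, give a commutative ladder of fibrations. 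Since $X^*$ and $S^*$ are of infinite type or of finite type with negative Euler characteristic, the identity components $\Homeo_0(X^*)$ and $\Homeo_0(S^*)$ are contractible by \cite[Theorem 5.1 and 5.3]{Ham66} and \cite[Theorem 1.1 (ii)]{Yag00H}, so the long exact sequences of these fibrations collapse to the Birman exact sequences
\[
    1\to B_k(X^*)\to \Map(X^*;P)\to\Map(X^*)\to 1,\qquad
    1\to B_{dk}(S^*)\to \Map(S^*;\widetilde{P})\to\Map(S^*)\to 1,
\]
connected by vertical maps $\rho$, $\phi_k$, $\phi$.

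It remains to invoke Corollary \ref{Cor:Injection:Characteristic}: applied to the cover $p\colon S\to X$ with the single marked point $x$ it gives injectivity of $\phi\colon\Map(X^*)\to\Map(S^*)$, and applied to the cover $p\colon S\to X$ with marked set $\{x\}\cup P$ — or directly to $p\colon S^*\to X^*$ with marked set $P$, since that cover is still geometrically characteristic — it gives injectivity of $\phi_k\colon\Map(X^*;P)\to\Map(S^*;\widetilde{P})$. A diagram chase on the ladder above then forces $\rho\colon B_k(X^*)\to B_{dk}(S^*)$ to be injective: if $b\in B_k(X^*)$ maps to the identity in $B_{dk}(S^*)$, then its image in $\Map(X^*;P)$ maps to the identity in $\Map(S^*;\widetilde P)$, hence is trivial by injectivity of $\phi_k$, hence $b$ is trivial by exactness of the top row. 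The only real subtlety — and the step I expect to require the most care — is verifying that $p\colon S^*\to X^*$ genuinely remains geometrically characteristic and that the lift map on homeomorphism groups is well defined and continuous in the punctured setting; once that is in place, everything else is the same formal argument as for Corollary \ref{Cor:Injectivity:Braids}, and indeed Corollary \ref{Cor:Injectivity:Braids} is recovered as the special case $X=N$, $S$ its orientable double cover, $d=2$, $x$ chosen so that $X^*$ and $S^*$ are the relevant punctured surfaces.
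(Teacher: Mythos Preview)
Your proposal is correct and follows essentially the same route as the paper: set up the ladder of Birman exact sequences for $X^*$ and $S^*$, invoke Corollary~\ref{Cor:Injection:Characteristic} (or rather its proof, via the splitting coming from the unique lift fixing $\widetilde{x}$) to get injectivity of the vertical maps $\phi$ and $\phi_k$, and conclude by the obvious diagram chase. The paper's own argument is just the one-sentence sketch preceding the statement of Corollary~\ref{Cor:Braids2}, and your write-up is in fact more detailed than theirs; in particular, your observation that the splitting is pinned down by the choice of $\widetilde{x}\in p^{-1}(x)$ (playing the role that ``orientation-preserving'' played in the double-cover case) is exactly the point the paper leaves implicit. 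One small remark: you do not actually need to verify that $p\colon S^*\to X^*$ is itself geometrically characteristic---what is used is only that every homeomorphism of $X^*$ extends over $x$, hence lifts, and that the lift fixing $\widetilde{x}$ is unique; this already gives the section needed for Proposition~\ref{Res:Injective:Homomorphism}.
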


%%%%%%%%%%%%%%%%%%%
%%%%%%%%%%%%%%%%%%
\subsection*{Acknowledgements}
The authors thank Porfirio L. León Álvarez for his question about injectivity between braid groups, which was answered in the affirmative in our Corollary \ref{Cor:OrientableDoubleCover}. N. Colin was funded by SECIHTI through the program \textit{Estancias Posdoctorales por México}. R. A. Hidalgo was supported by ANID Fondecyt grant 1230001. I. Morales was supported by ANID Fondecyt Postdoctoral grant 3240229. S. Quispe was supported by ANID Fondecyt grant 1220261.

\bibliographystyle{alpha}
\bibliography{References}
\end{document}